\theoremstyle{plain}
\newtheorem{theorem}{\bf Theorem}[subsection]
\newtheorem{cor}[theorem]{Corollary}
\newtheorem{lemma}[theorem]{Lemma}
\newtheorem{proposition}[theorem]{Proposition}
\newtheorem{thmnonumber}{\bf Theorem}
\newtheorem{cornonumber}[thmnonumber]{\bf Corollary}
\theoremstyle{definition}
\newtheorem{rem}[theorem]{Remark}
\newtheorem{definition}[theorem]{Definition}
\DeclareMathAlphabet{\mathpzc}{OT1}{pzc}{m}{it}
\newcommand{\NE}{\operatorname{NE} }
\newcommand{\TT}{\mathrm{T}}
\newcommand{\F}{\mathrm{F}}
\newcommand{\rint}{\mathrm{relint}}
\newcommand{\RN}{\mathrm{N}}
\newcommand{\R}{\mathbb{R}}
\renewcommand{\SS}{\mathbb{S}}
\newcommand{\RS}{\mathrm{R}\,}
\newcommand{\Lk}{\mathrm{Lk}\, }
\newcommand{\am}{\mathrm{arg} \mathrm{min}}
\newcommand{\St}{\mathrm{St}\, }
\newcommand{\sd}{\mathrm{sd}\, }
\newcommand{\CAT}{\mathrm{CAT}}
\newcommand{\cm}[1]{}
\renewcommand{\textparagraph}{** }
\begin{document}

\author{
Karim Adiprasito
\\
\small Einstein Institute for Mathematics\\ \small Hebrew University of Jerusalem\\
\small 91904 Jerusalem, Israel\\
\small \url{adiprasito@math.huji.ac.il}
\and
Bruno Benedetti \\
\small Department of Mathematics\\ \small University of Miami\\
\small 33146 Coral Gables, Florida\\
\small \url{bruno@math.miami.edu}}
\date{\small September 9, 2019}
\title{Collapsibility of CAT(0) spaces}
\maketitle
\bfseries

\mdseries


\begin{abstract}
Collapsibility is a combinatorial strengthening of contractibility. We relate this property to metric geometry by proving the collapsibility of any complex that is $\CAT(0)$ with a metric for which all vertex stars are convex. This strengthens and generalizes a result by Crowley. Further consequences of our work are: 
\begin{compactenum}[\rm (1)]
\item All $\CAT(0)$ cube complexes are collapsible.
\item Any triangulated manifold admits a $\CAT(0)$ metric if and only if it admits collapsible triangulations.
\item All contractible $d$-manifolds ($d \ne 4$) admit collapsible $\CAT(0)$ triangulations. This discretizes a classical result by Ancel--Guilbault. 
\end{compactenum}

\end{abstract}


\section{Introduction}


Whitehead's ``simple homotopy theory", a combinatorial approach to homotopy, was partly motivated by Poincar\'{e}'s conjecture that homotopy recognizes spheres among all closed manifolds. In contrast, Whitehead discovered that homotopy does not recognize $\R^n$ among all (open) manifolds. In fact, for each $d \ge 4$, there are compact contractible smooth $d$-manifolds whose boundary is not simply-connected~\cite{Mazur, Newman2, Whitehead1935}.

In 1939, Whitehead introduced \emph{collapsibility}, a combinatorial version of the notion of contractibility. All collapsible complexes are contractible, but the converse is false~\cite{BING, Zeeman}. The collapsibility property lies at the very core of PL topology and is of particular interest when applied to triangulations of manifolds. Using the idea of regular neighborhoods, Whitehead proved that the only manifold admitting collapsible PL triangulations is the ball \cite{Whitehead}.

Recently, collapsibility has regained interest in areas ranging from extremal combinatorics \cite{ALLM} to the study of subspace arrangements \cite{A, SalSet}; similar notions such as \emph{nonevasiveness} and \emph{dismantlability} have been explored in connection with theoretical computer science, cf.~e.g.~\cite{KahnSaksSturtevant, CivanYalcin, ChepoiBridge}.   Unfortunately, there are only a few available criteria to predict the collapsibility of a given complex:
\begin{compactenum}[(a)]
\item All cones are collapsible.
\item \emph{Chillingworth's criterion}~\cite{CHIL}: All subdivisions of convex $3$-polytopes are collapsible.
\item \emph{Crowley's criterion}~\cite{Crowley}: All $3$-dimensional pseudomanifolds that are $\CAT(0)$ with the equilateral flat metric (see below for the meaning), are collapsible.
\end{compactenum}
The main goal of this paper is to show that criterion (c) extends to all dimensions. In fact, in a separate companion paper, we will show that criterion (b) extends too~\cite{ABpart2}. Breaking the barrier of dimension three will allow us to list a few consequences at the interplay of combinatorics and metric geometry.

Crowley's criterion was advertised as a first ``\emph{combinatorial analog of Hadamard's theorem}''~\cite[p.~36]{Crowley}. Being $\CAT(0)$ is a property of metric spaces, popularized in the Eighties by Gromov~\cite{GromovHG}. Roughly speaking, $\CAT(0)$ spaces are metric spaces where any triangle formed by three geodesic segments looks thinner than any triangle in the euclidean plane formed by three straight lines of the same lengths. The classical Hadamard--Cartan theorem guarantees that simply connected, complete, locally $\CAT(0)$ spaces are (globally) $\CAT(0)$, and hence contractible.

Being ``$\CAT(0)$ with the equilateral flat metric'' is instead a property of simplicial complexes. It means that if we give the complex a piecewise-euclidean metric by assigning unit length to all its edges, then the complex becomes a $\CAT(0)$ metric space. Our first result is to prove the collapsibility of such complexes in all dimensions.  In fact, we show something stronger:

\begin{thmnonumber}[Theorem~\ref{thm:dischadamard}]
\label{mainthm:CATcollapsible}
Let $C$ be a simplicial complex that is $\CAT(0)$ with a metric for which all vertex stars are convex. Then $C$ is collapsible.
\end{thmnonumber}

{\em Convexity of vertex stars} means that for each vertex~$v$, with respect to the metric introduced on~$C$, the segment between any two points of~$\St (v,C)$ lies in~$\St (v,C)$. This condition cannot be removed, because not all triangulated $3$-balls are collapsible (although all of them are $\CAT(0)$ with a suitable metric). It is automatically satisfied by any complex in which all simplices are acute or right-angled. The same condition plays also a central role in the authors' proof of the Hirsch conjecture for flag polytopes \cite{ABHirsch}.

\begin{cornonumber}\label{mainthm:CATcube}
Every complex that is $\CAT(0)$ with the equilateral flat metric is collapsible. Similarly, every $\CAT(0)$ cube complex is collapsible.
\end{cornonumber}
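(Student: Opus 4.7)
My plan is to derive both assertions as immediate instances of the Main Theorem (Theorem~\ref{mainthm:CATcollapsible}) by verifying the convex-vertex-star hypothesis in each of the two settings. The paper already asserts, in the paragraph after the Main Theorem, that acute or right-angled cells automatically yield convex vertex stars, so the task reduces to the angle computation in each geometry.

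For the equilateral flat metric, every simplex is isometric to a regular Euclidean simplex of unit edge length. I would record that the dihedral angle of a regular $d$-simplex along any codimension-one face equals $\arccos(1/d)$, which is at most $\pi/3 < \pi/2$ for $d \geq 2$ (and undefined/vacuous in the edge case). Hence every simplex is acute, the convexity remark applies, and the Main Theorem yields collapsibility. For the $\CAT(0)$ cube complex, every cell is a unit Euclidean cube whose dihedral angles are all equal to $\pi/2$, so cells are right-angled, the same remark applies, and the Main Theorem again concludes. A minor subtlety is that cube complexes are not simplicial on the nose; I would address this either by invoking the Main Theorem directly in the polyhedral setting (if its proof is phrased that way) or by passing to a cubically-compatible simplicial refinement and noting that the $\CAT(0)$ metric and the vertex-star convexity are preserved under such a refinement.

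The main conceptual step one must be sure about is the convexity implication itself — namely that, in a $\CAT(0)$ complex whose cells meet at angles not exceeding $\pi/2$ at each vertex, every vertex star is (metrically) convex. I would justify it by the standard local argument: the link of a vertex $v$, equipped with the induced angular metric, is $\CAT(1)$, and the acute/right-angled hypothesis on the incident cells implies that the spherical link of $\St(v, C)$ inside the link of $v$ has no pair of points at angular distance exceeding $\pi/2$ on the outside; any geodesic between two points of $\St(v, C)$ that tried to leave the star would have to re-enter through a codimension-one face at a reflex angle, contradicting convexity of individual cells. This is standard in comparison geometry, and is almost certainly the content of one of the lemmas the authors establish before the Main Theorem; so in practice the corollary is one line — ``apply Theorem~\ref{thm:dischadamard} after noting that regular simplices are acute and cubes are right-angled.''

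The only obstacle I anticipate is bookkeeping rather than mathematics: making sure the notion of ``complex'' and of ``collapsibility'' used in the Main Theorem really covers the cubical case, or else producing a single explicit subdivision that converts the cube case to the simplicial case while keeping the $\CAT(0)$ structure and the convex-vertex-star condition intact.
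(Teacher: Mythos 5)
Your treatment of the first assertion is essentially the paper's: regular simplices are acute, acute implies non-obtuse, non-obtuse cells force convex stars, and Theorem~\ref{thm:dischadamard} applies. The paper packages this as Corollary~\ref{cor:CrowleyA} (non-obtuse $\CAT(0)$ complexes are collapsible) plus Corollary~\ref{cor:CrowleyB}; its convexity argument checks convexity of \emph{ridge} stars (two non-obtuse facets sharing a ridge form a locally convex union) and then upgrades local to global convexity via the non-expansiveness of the closest-point projection (Remark~\ref{rem:localconvexity}), rather than your link-based $\CAT(1)$ sketch, but the two routes are interchangeable here.

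The cubical half is where you have a real gap, and it is not bookkeeping. The proof of Theorem~\ref{thm:dischadamard} is intrinsically simplicial: the gradient matching of Section~\ref{subsec:1} pairs $\sigma$ with $y_f(\sigma)\ast\sigma$, and the whole analysis (Parts I--IV of Theorem~\ref{thm:formula}) depends on the simplicial join, i.e.\ on the fact that a vertex $y$ outside a face $\sigma$ of a facet spans with $\sigma$ a face of dimension exactly $\dim\sigma+1$. In a cube complex this fails --- the minimal face of a cube containing a vertex and a disjoint face can jump in dimension --- so the matching is not even a discrete vector field. The paper therefore proves a separate statement, Theorem~\ref{thm:dischadamardpoly}, by a genuinely different induction: it collapses away, at each step, the subcomplex $F$ of facets whose $f$-minimum is attained at a common face $\mu$, using Bruggesser--Mani rocket shellings (Lemma~\ref{lem:BruMani}) to collapse each polytope of $F$ onto $\St(\mu,\partial P)$, and then verifies that the restricted function still has unique local minima on stars. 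Your fallback of passing to a simplicial refinement also needs an argument you have not supplied: you must produce a triangulation of the cube complex (e.g.\ compatible Kuhn triangulations of the cubes) and then re-verify that the relevant stars --- including stars of the newly created ridges --- are still convex in the $\CAT(0)$ metric; this is exactly the kind of claim that can fail if the refinement introduces obtuse dihedral angles across cube boundaries, so it cannot be waved through. Either supply the polytopal version of the collapsing argument or carry out the refinement verification explicitly.
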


For example, the ``space of phylogenetic trees'' introduced by Billera, Holmes and Vogtmann \cite{BilleraHolmesVogtmann} turns out to be collapsible (Corollary~\ref{cor:phylo}). A further application to enumerative combinatorics is the discrete Cheeger theorem in \cite{ABpart3}.

The converses of Theorem~\ref{mainthm:CATcollapsible} and Corollary~\ref{mainthm:CATcube} are false. Being $\CAT(0)$ with the equilateral flat metric is a more restrictive property than being collapsible; for example, any vertex-transitive complex with this property is a simplex (Proposition~\ref{prop:evasiveness}), while Oliver provided collapsible simplicial complexes with vertex-transitive symmetry group that are not the simplex, cf.\ \cite{KahnSaksSturtevant}. Nonetheless, a partial converse is possible if we are allowed to change the triangulation, as explained below.

Theorem~\ref{mainthm:CATcollapsible} allows us to understand the topology of manifolds that admit collapsible triangulations. This was investigated by Whitehead only in the PL case, where he proved that collapsible PL triangulations  are balls \cite{Whitehead}. Revisiting previous work of Ancel and Guilbault \cite{AS, AG2}, we are able  to prove the following:

\begin{thmnonumber}[Theorem~\ref{thm:ccc}, cf.\ \cite{AS, AG2}]\label{mainthm:CCC}
For any integer $d$, the $d$-manifolds admitting a collapsible triangulation are precisely those that admit a $\CAT(0)$ (polyhedral) metric. In particular, all contractible $d$-manifolds admit a collapsible triangulation, except for $d=4$. 
\end{thmnonumber}

For simplicial complexes, the situation changes slightly: paired with a result by the first author and Funar \cite[Proposition 15]{AFunar}, Theorem~\ref{mainthm:CATcollapsible} still implies that the complexes admitting a collapsible subdivision are precisely those that admit a polyhedral $\CAT(0)$ metric (Remark \ref{rem:Funar}). However, these are some, but not all, of the contractible complexes: For example, the Dunce Hat has no collapsible subdivision.



\section{Basic notions} \label{sec:Preliminaries}

\subsection{Geometric and intrinsic polytopal complexes}
By $\R^d$, $\mathbb{H}^d$ and $\SS^d$ we denote the euclidean $d$-space, the hyperbolic $d$-space, and the unit sphere in $\R^{d+1}$, respectively. A \emph{(euclidean) polytope} in $\R^d$ is the convex hull of finitely many points in $\R^d$. Similarly, a \emph{hyperbolic polytope} in $\mathbb{H}^d$ is the convex hull of finitely many points of $\mathbb{H}^d$. A \emph{spherical polytope} in $\SS^d$ is the convex hull of a finite number of points that all belong to some open hemisphere of $\SS^d$. Spherical polytopes are in natural one-to-one correspondence with euclidean polytopes, just by taking radial projections; the same is true for hyperbolic polytopes. A \emph{geometric polytopal complex} in $\R^d$ (resp.\ in $\SS^d$ or $\mathbb{H}^d$) is a finite collection of polytopes in $\R^d$ (resp.~$\SS^d$, $\mathbb{H}^d$) such that the intersection of any two polytopes is a face of both. An \emph{intrinsic polytopal complex} is a collection of polytopes that are attached along isometries of their faces (cf.\ Davis--Moussong~\cite[Sec.\ 2]{DavisMoussong}), so that the intersection of any two polytopes is a face of both. 

Two polytopal complexes $C,\, D$ are \emph{combinatorially equivalent}, denoted by $C\cong D$, if their face posets are isomorphic. Any polytope combinatorially equivalent to the $d$-simplex, or to the regular unit cube $[0,1]^d$, shall simply be called a \emph{$d$-simplex} or a \emph{$d$-cube}, respectively. A polytopal complex is \emph{simplicial}  (resp.~\emph{cubical}) if all its faces are simplices (resp.~cubes). The set of $k$-dimensional faces of a polytopal complex $C$ is denoted by $\F_k(C)$, and the cardinality of this set is denoted by $f_k(C).$ 

The \emph{underlying space} $|C|$ of a polytopal complex $C$ is the topological space obtained by taking the union of its faces. If two complexes are combinatorially equivalent, their underlying spaces are homeomorphic. We will frequently abuse notation and identify a polytopal complex with its underlying space, as is common in the literature. For instance, we do not distinguish between a polytope and the complex formed by its faces. If $C$ is simplicial, $C$ is sometimes called a \emph{triangulation} of $|C|$ (and of any topological space homeomorphic to $|C|$). If $|C|$ is isometric
to some metric space $M$, then $C$ is called a \emph{geometric triangulation} of~$M$.

A \emph{subdivision} of a polytopal complex $C$ is a polytopal complex $C'$ with the same underlying space of $C$, such that for every face $F'$ of $C'$ there is some face $F$ of $C$ for which $F' \subset F$. Two polytopal complexes $C$ and $D$ are called \emph{PL equivalent} if some subdivision $C'$ of $C$ is combinatorially equivalent to some subdivision $D'$ of $D$. In case $|C|$ is a topological manifold (with or without boundary), we say that $C$ is \emph{PL} (short for Piecewise-Linear) if the star of every face of $C$ is PL equivalent to the simplex of the same dimension.

A \emph{derived subdivision} $\sd  C$ of a polytopal complex $C$ is any subdivision of $C$ obtained by stellarly subdividing at all faces in order of decreasing dimension of the faces of $C$, cf.\ \cite{Hudson}.  An example of a derived subdivision is the \emph{barycentric subdivision}, which uses as vertices the barycenters of all faces of $C$.
 
If $C$ is a polytopal complex in $\R^d$ (or $\SS^d$, or $\mathbb{H}^d$) and $A$ is a subset of $\R^d$ (resp.~$\SS^d$, resp.~$\mathbb{H}^d$), we define the \emph{restriction $\RS(C,A)$ of $C$ to $A$} as the inclusion-maximal subcomplex $D$ of $C$ such that $D$ lies in $A$. The \emph{star} of $\sigma$ in $C$, denoted by $\St(\sigma, C)$, is the minimal subcomplex of $C$ that contains all faces of $C$ containing $\sigma$. The \emph{deletion} $C-D$ of a subcomplex $D$ from $C$ is the subcomplex of $C$ given by $\RS(C,  C{\setminus} \rint{D})$.

Next, we define the notion of \emph{link} with a metric approach. (Compare also Charney \cite{Charney} and Davis--Moussong~\cite[Sec.\ 2.2]{DavisMoussong}.) Let $p$ be any point of a metric space $X$. By $\TT_p X$ we denote the tangent space of $X$ at $p$. Let $\TT^1_p X$ be the restriction of $\TT_p X$ to unit vectors.  If $Y$ is any subspace of $X$, then $\RN_{(p,Y)} X$ denotes the subspace of the tangent space $\TT_p X$ spanned by the vectors orthogonal to $\TT_p Y$. If $p$ is in the interior of $Y$, we define $\RN^1_{(p,Y)} X:= \RN_{(p,Y)} X \cap \TT^1_p Y$.

If $\tau$ is any face of a polytopal complex $C$ containing a nonempty face $\sigma$ of $C$, then the set $\RN^1_{(p,\sigma)} \tau$ of unit tangent vectors in $\RN^1_{(p,\sigma)} |C|$ pointing towards $\tau$ forms a spherical polytope $P_p(\tau)$, isometrically embedded in $\RN^1_{(p,\sigma)} |C|$. The family of all polytopes $P_p(\tau)$ in $\RN^1_{(p,\sigma)} |C|$ obtained for all $\tau \supset \sigma$ forms a polytopal complex, called the \emph{link} of $C$ at $\sigma$; we will denote it by $\Lk_p(\sigma, C)$. If $C$ is a geometric polytopal complex in $X^d=\R^d$ (or $X^d=\SS^d$), then $\Lk_p(\sigma, C)$ is naturally realized in $\RN^1_{(p,\sigma)} X^d$. Obviously, $\RN^1_{(p,\sigma)} X^d$ is isometric to a sphere of dimension $d-\dim \sigma -1$, and will be considered as such. Up to ambient isometry $\Lk_p(\sigma, C)$ and  $\RN^1_{(p,\sigma)} \tau$ in $ \RN^1_{(p,\sigma)} |C|$ or $\RN^1_{(p,\sigma)} X^d$ do not depend on $p$; for this reason, $p$ will be omitted in notation whenever possible.

If $C$ is simplicial, and $v$ is a vertex of $C$, then $\Lk(v,C)$ is combinatorially equivalent to \[(C-v)\cap \St(v,C)=\St(v,C)-v.\]By convention, $\Lk(\emptyset, C):=C$. If $C$ is a simplicial complex, and $\sigma$, $\tau$ are faces of $C$, then $\sigma\ast \tau$ is the minimal face of $C$ containing both $\sigma$ and $\tau$ (assuming it exists). If $\sigma$ is a face of $C$, and $\tau$ is a face of $\Lk(\sigma,C)$, then $\sigma \ast \tau$ is the face of $C$ with $\Lk(\sigma,\sigma \ast \tau)=\tau$. In both cases, the operation~$\ast$ is called the \emph{join}.

\subsection{CAT(k) spaces and convex subsets} 
All of the metric spaces we consider here are \emph{compact, connected length spaces}. For a more detailed introduction, we refer the reader to the textbook by Burago--Burago--Ivanov~\cite{BuragoBuragoIvanov}. 
Given two points $a,b$ in a length space $X$, we denote by $|ab|$ the \emph{distance} between $a$ and $b$, which is also the infimum (and by compactness, the minimum) of the lengths of all curves from $a$ to $b$. Rectifiable curves that connect $a$ with $b$ and realize the distance $|ab|$ are \emph{(geodesic) segments} of $X$. A \emph{geodesic} from $a$ to $b$ is a curve $\gamma$ from $a$ to $b$ which is locally a segment. A \emph{geodesic triangle} in $X$ is given by three vertices $a,b,c$ connected by segments  $[a,b],[b,c]$ and $[a,c]$. 

Let $k$ be a real number. Depending on the sign of $k$, by \emph{the $k$-plane} we mean either the euclidean plane (if $k = 0$), or the sphere of radius $k^{-\frac{1}{2}}$ with its length metric (if $k > 0$), or the hyperbolic plane of curvature $k$ (if $k < 0$). A \emph{$k$-comparison triangle} for $[a,b,c]$ is a triangle $[\bar{a},\bar{b},\bar{c}]$ in the $k$-plane such that $|\bar{a}\bar{b}|=|ab|$, $|\bar{a}\bar{c}|=|ac|$ and $|\bar{b}\bar{c}|=|bc|$.  A length space $X$ ``\emph{has curvature $\le k$}'' if locally (i.e. in some neighborhood of each point of $X$) the following holds:
\smallskip
\begin{compactitem}[$\bullet$]
\item[] {\textsc{Triangle condition}:} For each geodesic triangle $[a,b,c]$ inside $X$ and for any point $d$ in the relative interior of $[a, b]$, one has  $|cd|\leq |\bar{c}\bar{d}|$, where $[\bar{a},\bar{b},\bar{c}]$ is any $k$-comparison triangle for $[a,b,c]$ and $\bar{d}$ is the unique point on $[\bar{a},\bar{b}]$ with $|ad| = |\bar{a}\bar{d}|$. 
\end{compactitem}
\smallskip
A \emph{$\CAT(k)$ space} is a length space of curvature $\le k$ in which the triangle condition holds globally, i.e.\ for any geodesic triangle (with edgelengths $<k^{\nicefrac{-1}{2}}\pi$ if $k$ is positive).
Obviously, any $\CAT(0)$ space has curvature $\le 0$. The converse is false: The  circle $S^1$ is non-positively curved (because it is locally isometric to $\mathbb{R}$) but not $\CAT(0)$, as shown by any geodesic triangle that is not homotopy equivalent to a point. Also, all $\CAT(0)$ spaces are contractible, while $S^1$ is not even simply connected. This is not a coincidence, as explained by the Hadamard--Cartan theorem, which provides a crucial local-to-global correspondence:

\begin{theorem}[Hadamard--Cartan theorem, cf.\ Alexander--Bishop \cite{AlexanderBishop}]
Let $X$ be any complete space with curvature $\le 0$. The following are equivalent:
\begin{compactenum}[\rm (1)] 
\item $X$ is simply connected;
\item $X$ is contractible;
\item $X$ is $\CAT(0)$.
\end{compactenum}
\end{theorem}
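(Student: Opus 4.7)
The implications $(3)\Rightarrow(2)$ and $(2)\Rightarrow(1)$ form the easy direction. For $(3)\Rightarrow(2)$, my plan is first to show that in a $\CAT(0)$ space every pair of points $a,b$ is joined by a unique geodesic: two distinct geodesics between them would have midpoints whose distance to $a$ and $b$ violates the triangle condition against an isosceles euclidean comparison triangle. A second application of the triangle condition yields the convexity inequality $|\gamma(t)\gamma'(t)|\le (1-t)|aa'|+t|bb'|$ for geodesics with endpoints $(a,b)$ and $(a',b')$, so the geodesic depends continuously on its endpoints. Fixing any basepoint $p_0\in X$, the map $H(x,t):=\gamma_{p_0,x}(t)$ is therefore a continuous deformation retraction of $X$ onto $p_0$, so $X$ is contractible. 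The implication $(2)\Rightarrow(1)$ is immediate since contractible spaces are simply connected.

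The real content is $(1)\Rightarrow(3)$, which I would attack in three steps. First, I would use completeness together with the local triangle condition to establish that any two sufficiently close points are joined by a unique segment, and that in a nonpositively curved space a local geodesic remains locally minimizing along its entire length (no conjugate points can form). Second, I would promote this to global existence and uniqueness of geodesics between arbitrary pairs of points: existence follows from a Hopf--Rinow style argument for complete length spaces, while uniqueness is where simple connectedness enters. Two distinct geodesics from $a$ to $b$ bound a loop which, by simple connectedness, admits a null-homotopy; refining this null-homotopy through the neighborhoods where local uniqueness already holds produces a contradiction, so geodesics are globally unique.

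The substantive third step is to establish the global triangle condition. The plan is the classical subdivision-and-gluing argument: given a geodesic triangle $[a,b,c]$, I would subdivide it by auxiliary geodesics into a fine grid of small sub-triangles, each contained in a ball where the local $\CAT(0)$ inequality is assumed to hold. I would then assemble the comparison triangles for these small pieces inductively in the euclidean plane by Reshetnyak's gluing lemma, which says that concatenating two euclidean comparison triangles along a common edge produces a convex planar region that dominates the distances of the union. Passing to the limit as the subdivision is refined yields a distance-nonincreasing comparison map from the euclidean comparison triangle for $[a,b,c]$ onto the original, which is exactly the global triangle condition. The main obstacle will be ensuring that this inductive gluing closes up consistently: simple connectedness is precisely what rules out the monodromy obstruction one would otherwise face when assembling a global comparison map from locally defined ones.
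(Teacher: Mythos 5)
The paper does not prove this statement: it is quoted as a background theorem of metric geometry, with a pointer to Alexander--Bishop, so there is no in-paper argument to compare yours against. Your outline follows the standard route (Alexander--Bishop; Bridson--Haefliger, Ch.~II.4): the easy implications $(3)\Rightarrow(2)\Rightarrow(1)$ via unique geodesics, convexity of the metric, and geodesic contraction to a basepoint, and the hard implication $(1)\Rightarrow(3)$ via a local-to-global argument. That overall architecture is the right one.

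Two soft spots in the hard direction. First, the heart of the Cartan--Hadamard argument is precisely your step 2, and the mechanism you give for global uniqueness --- ``refining the null-homotopy through the neighborhoods where local uniqueness already holds'' --- does not work as stated: local uniqueness of short segments does not let you compare two long geodesics along a null-homotopy of their concatenation. The standard argument first proves that the distance between two local geodesics (affinely reparametrized on $[0,1]$) is a convex function of the parameter --- this already requires a preliminary patchwork on thin strips using the local $\CAT(0)$ condition --- and then uses completeness to show that the space of local geodesics issuing from a fixed basepoint, with the endpoint evaluation map, is a covering of $X$; simple connectedness then forces each point to be joined to the basepoint by exactly one local geodesic, which is moreover minimizing. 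Without the convexity statement your homotopy argument has no way to propagate. Second, the gluing step you call ``Reshetnyak's gluing lemma'' is really Alexandrov's lemma (Reshetnyak's gluing theorem concerns gluing two $\CAT(k)$ spaces along a complete convex subset), and once global unique geodesicity with continuous dependence on endpoints is in hand, the patchwork argument for the global triangle condition needs no further appeal to simple connectedness --- there is no monodromy obstruction at that stage, so your closing remark misattributes where $\pi_1$ enters.
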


\noindent In a $\CAT(0)$ space, any two points are connected by a unique geodesic. The same holds for $\CAT(k)$ spaces ($k >0$), as long as the two points are at distance $ < \pi k^{-\frac{1}{2}}$.
 
Let $K$ be a subset of a metric space $X$. A set $K$ is called \emph{convex} if any two points of $K$ are connected by some segment in $X$ that lies entirely in $K$.

Let $c$ be a point of $X$ and let $K$ be a closed subset of $X$, not necessarily convex. We denote by $\pi_c(K)$ the subset of the points of $K$ at minimum distance from $c$, the {\em closest-point projection} of $c$ to $K$. In case $\pi_c(K)$ contains a single point, with abuse of notation we write $\pi_c(K) = x$ instead of $\pi_c(K) = \{x\}$. This is always the case when $K$ is convex, as the following well-known lemma shows. 

\begin{lemma}[{\cite[Prp.\ 2.4]{BH}}]\label{lem:unique}
Let $X$ be a connected $\CAT(k)$-space, $k\le 0$.  Let $c$ be a point of $X$. Then the function ``distance from $c$'' has a unique local minimum on each closed convex subset $K$ of $X$. Similarly, if $k>0$ and $K$ is at distance less than $\tfrac{1}{2}\pi k^{\nicefrac{-1}{2}}$ from $c$, then there exists a unique local minimum of distance at most $\tfrac{1}{2}\pi k^{\nicefrac{-1}{2}}$ from $c$.
\end{lemma}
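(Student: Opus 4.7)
The plan is to argue by contradiction: suppose the distance function $d(\cdot):=|c\,\cdot|$ has two distinct local minima $a\ne b$ in the convex set $A$. The key tool is that $d$ is convex along geodesics in any $\CAT(0)$ space. To see this, take any geodesic $\gamma\colon[0,1]\to X$ and any $t_0,t_1\in[0,1]$, $\lambda\in[0,1]$, and set $t_\lambda:=(1-\lambda)t_0+\lambda t_1$. Applying the triangle condition to $[c,\gamma(t_0),\gamma(t_1)]$ with euclidean comparison triangle $[\bar c,\bar p_0,\bar p_1]$, and setting $\bar p_\lambda:=(1-\lambda)\bar p_0+\lambda\bar p_1$, one obtains $|c\gamma(t_\lambda)|\le|\bar c\bar p_\lambda|$, while convexity of the euclidean norm gives $|\bar c\bar p_\lambda|\le(1-\lambda)|\bar c\bar p_0|+\lambda|\bar c\bar p_1|$. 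Combining these, $f(t):=|c\gamma(t)|$ is convex on $[0,1]$.

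Since $A$ is convex, the geodesic from $a$ to $b$ lies entirely in $A$; parametrize it as $\gamma\colon[0,1]\to A$ with $\gamma(0)=a$ and $\gamma(1)=b$. By the previous step, $f(t):=|c\gamma(t)|$ is convex on $[0,1]$ with local minima at both endpoints. A convex function on $[0,1]$ with a local minimum at an endpoint attains its global minimum there: if $f(t^*)<f(0)$ for some $t^*\in(0,1]$, then applying convexity at $t=(1-t/t^*)\cdot 0+(t/t^*)\,t^*$ yields $f(t)<f(0)$ for every $t\in(0,t^*)$, contradicting the local minimum. Hence $f(0)=f(1)=\min f$, and convexity then forces $f\equiv\min f$ on $[0,1]$. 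In particular, the midpoint $m$ of $[a,b]$ satisfies $|cm|=|ca|=|cb|$.

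The contradiction now comes from the Bruhat--Tits (CN) inequality, equivalent to $\CAT(0)$ for length spaces:
\[
|cm|^2\;\le\;\tfrac{1}{2}|ca|^2+\tfrac{1}{2}|cb|^2-\tfrac{1}{4}|ab|^2\;=\;|ca|^2-\tfrac{1}{4}|ab|^2\;<\;|ca|^2,
\]
since $a\ne b$ forces $|ab|>0$. This contradicts $|cm|=|ca|$, settling the case $k\le 0$.

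For $k>0$ the same strategy carries over with spherical $k$-comparison: the hypothesis that every point of $A$ is within $\tfrac{1}{2}\pi k^{-1/2}$ of $c$ ensures that all triangles $[c,a,b]$ with $a,b\in A$ have perimeter less than $2\pi k^{-1/2}$, so $k$-comparison triangles exist uniquely and a spherical analogue of the strict midpoint contraction holds on the $k$-sphere. The principal obstacle here is just the bookkeeping needed to stay in this good regime; the essential geometric content is already captured by the $k\le 0$ argument.
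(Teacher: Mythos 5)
The paper does not actually prove this lemma --- it is imported from Bridson--Haefliger (cited as \cite[Prp.\ 2.4]{BH}) --- so there is no in-paper argument to compare against; what you have written is the standard proof, and for $k\le 0$ it is correct. The comparison-triangle derivation of convexity of $t\mapsto|c\gamma(t)|$ is right (the only implicit point is that $\gamma(t_\lambda)$ is the comparison point of $\bar p_\lambda$, which holds for the constant-speed parametrization), the endpoint argument correctly forces $f$ to be constant on $[a,b]$, and the CN inequality gives the strict contradiction; note you only need the easy direction of Bruhat--Tits, which follows from the Euclidean median-length identity plus comparison, so invoking the full equivalence is unnecessary. Two caveats. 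First, you establish that there is \emph{at most} one local minimum; existence is a separate matter and requires compactness of $A$ (which the paper grants by assuming all spaces compact) or completeness as in Bridson--Haefliger. Second, the $k>0$ case is genuinely only a sketch: the two facts you would need to supply are (a) that the segment $[a,b]$ stays inside the ball $B\bigl(c,\tfrac{1}{2}\pi k^{-1/2}\bigr)$ --- here this is immediate since $[a,b]\subset A$ by convexity of $A$ and $A$ lies in that ball by hypothesis --- and (b) that $x\mapsto-\cos\bigl(\sqrt{k}\,|cx|\bigr)$ is strictly convex along geodesics in that ball, which replaces the CN inequality. Both are standard, so the sketch is acceptable for a lemma the paper itself leaves to the literature, but as written the positive-curvature half is not yet a proof.
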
  

\subsection{Discrete Morse theory, collapsibility and non-evasiveness}

The \emph{face poset} $(C, \subseteq)$ of a polytopal complex $C$ is the set of nonempty faces of $C$, ordered with respect to inclusion. By $(\mathbb{R}, \le)$ we denote the poset of real numbers with the usual ordering. A \emph{discrete Morse function} is an order-preserving map $f$ from $(C, \subseteq)$ to $(\mathbb{R}, \le)$, such that the preimage $f^{-1}(r)$ of any number $r$ consists of either one element, or of two elements one contained in the other. A \emph{critical cell} of $C$ is a face at which $f$ is strictly increasing.  

The function $f$ induces a perfect matching on the non-critical cells: two cells are matched whenever they have identical image under $f$. This is called \emph{Morse matching}, and it is usually represented by a system of arrows: Whenever $\sigma \subsetneq \tau$ and $f(\sigma) = f(\tau)$, one draws an arrow from the barycenter of $\sigma$ to the barycenter of $\tau$. We consider two discrete Morse functions  \emph{equivalent} if they induce the same Morse matching. Since any Morse matching pairs together faces of different dimensions, we can always represent a Morse matching by its  associated partial function  
$\Theta$ from $ C$ to itself, defined on a subset 
of $C$ as follows:

\[
\Theta (\sigma) \ := \ 
\left\{
\begin{array}{cl}
\sigma & \textrm{if $\sigma$ is unmatched}, \\
\tau & \textrm{if $\sigma$ is matched with $\tau$ and $\dim \sigma < \dim \tau$}.
\end{array}
\right.
\]

A {\it discrete vector field} $V$ on a polytopal complex $C$ is a collection of pairs $(\sigma,\Sigma)$ of faces such that $\sigma$ is a codimension-one face of $\Sigma$, and no face of $C$ belongs to two different pairs of $V$. 
A \emph{gradient path} in $V$ is a concatenation of pairs of $V$
\[ (\sigma_0, \Sigma_0), (\sigma_1, \Sigma_1),  \ldots, (\sigma_k, \Sigma_k),\, k\ge 1,\]
so that for each $i$ the face $\sigma_{i+1}$ is a codimension-one face of $\Sigma_i$ different from $\sigma_i$. A gradient path is \emph{closed} if $\sigma_0 = \sigma_k$ for some $k$ (that is, if the gradient path forms a closed loop). A discrete vector field $V$ is a Morse matching if and only if $V$ contains no closed gradient paths~\cite{FormanADV}. 
The main result of discrete Morse Theory is the following:

\begin{theorem}[Forman {\cite[Theorem 2.2]{FormanUSER}}] \label{MorseThm}
Let $C$ be a polytopal complex. For any Morse matching on $C$, the complex $C$ is homotopy equivalent to a CW complex with one $i$-cell for each critical $i$-simplex. 
\end{theorem}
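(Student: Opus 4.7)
\begin{proofp}
The plan is to prove Forman's theorem by constructing a filtration of $C$ in which, at each step, one either attaches a cell of the claimed CW complex (at a critical cell) or performs an elementary collapse that preserves homotopy type (at a matched pair). This reduces the global statement to a local analysis of what happens as the Morse function crosses one of its values.

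First, I would argue that the acyclicity of the matching allows us to refine the Morse function so that its values on critical cells and on matched pairs are all distinct, while still remaining order-preserving on $(C,\subseteq)$; this is essentially a linear extension of the partial order obtained by superimposing the face relation and the ``matched pair'' relation, and it is here that the hypothesis that $V$ contains no closed gradient path is used (a closed path would obstruct such a linear extension, because it would force a cyclic chain of forced inequalities $f(\sigma_0)\le f(\Sigma_0)<f(\sigma_1)\le f(\Sigma_1)<\cdots$ that returns to $\sigma_0$). Having done this, define the sublevel complex $C_{\le r}:=\{\sigma\in C:f(\sigma)\le r\}$; since $f$ is order-preserving, $C_{\le r}$ is a subcomplex of $C$.

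Next, I would analyze how $C_{\le r}$ changes as $r$ crosses a value of $f$. Only two things can happen. (i) A single critical cell $\sigma$ of dimension $i$ enters; then the entire boundary $\partial\sigma$ already lies in $C_{<r}$, so $C_{\le r}$ is obtained from $C_{<r}$ by attaching an $i$-cell along its boundary, producing exactly one $i$-cell of the output CW complex. (ii) A matched pair $(\sigma,\Sigma)$ enters, with $\dim\Sigma=\dim\sigma+1$. Here the key claim is that $\sigma$ is a free face of $\Sigma$ inside $C_{\le r}$: any face $\tau\supsetneq\sigma$ other than $\Sigma$ satisfies $f(\tau)\ge f(\sigma)=r$, and the condition $|f^{-1}(r)|\le 2$ together with the fact that $\sigma,\Sigma$ are already paired force $f(\tau)>r$, so $\tau\notin C_{\le r}$. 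Therefore the inclusion $C_{<r}\hookrightarrow C_{\le r}$ is an elementary collapse of the pair $(\sigma,\Sigma)$, which is a deformation retract and does not change homotopy type.

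Finally, I would assemble these steps inductively along the (finite) sequence of critical values of $f$. Starting from $\emptyset$, at each step the homotopy type of the sublevel complex either remains unchanged (case (ii)) or acquires exactly one new cell of the correct dimension (case (i)). Since $C_{\le\max f}=C$, this proves that $C$ is homotopy equivalent to a CW complex with one $i$-cell per critical $i$-face, as required. The main obstacle in this plan, and the one deserving the most care, is the freeness claim in case (ii): it is what makes the whole argument possible, and it is where the defining conditions on a Morse function (order-preservation and the ``at most two cells per fibre'' restriction, together with acyclicity inherited from the matching) are used simultaneously.
\end{proofp}
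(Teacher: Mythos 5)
The paper does not prove this statement at all: it is quoted verbatim as a known result of Forman, with a citation to his \emph{User's Guide} in place of a proof. Your sketch correctly reproduces Forman's own standard argument --- build a compatible function with distinct values from the acyclic matching, filter by sublevel complexes, and observe that crossing a value either attaches one cell (critical case) or performs an elementary collapse via the freeness of the smaller cell of a matched pair --- so there is nothing to compare against in the paper and no gap to report; the one step you flag as delicate (freeness in case (ii)) is argued correctly from the at-most-two-cells-per-fibre condition.
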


Inside a polytopal complex $C$, a \emph{free} face $\sigma$ is a face strictly contained in only one other face of $C$. An \emph{elementary collapse} is the deletion of a free face $\sigma$ from a polytopal complex~$C$. We say that $C$ \emph{(elementarily) collapses} onto $C-\sigma$, and write $C\searrow_e C-\sigma.$ We also say that the complex $C$ \emph{collapses} to a subcomplex $C'$, and write~$C\searrow C'$, if $C$ can be reduced to $C'$ by a sequence of elementary collapses. A \emph{collapsible} complex is a complex that collapses onto a single vertex. Collapsibility is, clearly, a combinatorial property (i.e. it only depends on the combinatorial type), and does not depend on the geometric realization of a polytopal complex. The connection to discrete Morse theory is highlighted by the following simple result:

\begin{theorem}[Forman \cite{FormanADV}]\label{lem:equivalence}
Let $C$ be a polytopal complex. The complex $C$ is collapsible if and only if $C$ admits a discrete Morse function with only one critical face.
\end{theorem}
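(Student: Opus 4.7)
The plan is to prove both directions by explicitly translating between a collapsing sequence and a discrete Morse function.

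For the forward implication, suppose $C \searrow \{v\}$ via elementary collapses removing pairs $(\sigma_1, \Sigma_1), \ldots, (\sigma_N, \Sigma_N)$ in that order, where $\sigma_i$ is a free face of its unique coface $\Sigma_i$ in the subcomplex present at step $i$. I would define $f\colon (C,\subseteq) \to (\mathbb{R},\leq)$ by $f(\sigma_i) = f(\Sigma_i) = N-i+1$ and $f(v)=0$. Every face of $C$ lies in exactly one such pair (or equals $v$), so the preimages of $f$ have size at most $2$ and are totally ordered by inclusion. To check order-preservation for a strict inclusion $\sigma \subsetneq \tau$: if $(\sigma,\tau)=(\sigma_i,\Sigma_i)$ the values coincide; otherwise $\tau$ must have been removed strictly before $\sigma$, for $\sigma$ is free at the moment of its collapse and cannot still have $\tau$ as a coface. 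Thus $f(\tau)>f(\sigma)$. The resulting $f$ is a discrete Morse function in the sense of the preliminaries, and the only cell not sitting in a matched pair is $v$, the unique critical face.

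For the backward implication, let $f$ be a discrete Morse function on $C$ with exactly one critical face; this critical face must be a vertex $v$, for otherwise Theorem~\ref{MorseThm} would force $C$ to be homotopy equivalent to a CW complex consisting of a single cell of positive dimension, with no lower skeleton to receive its attaching map. I would then collapse the matched pairs in order of \emph{decreasing} $f$-value. At each stage, let $(\sigma,\Sigma)$ denote the remaining matched pair of largest $f$-value and pick any $\tau \supsetneq \sigma$ in the current subcomplex with $\tau \neq \Sigma$. Order-preservation gives $f(\tau)\geq f(\sigma)$, with strict inequality because $\tau$ is not matched to $\sigma$. Either $\tau$ lies in a matched pair of $f$-value $f(\tau) > f(\sigma,\Sigma)$ and has been processed already; or $\tau$ is critical, forcing $\tau=v$, which contradicts $\dim \tau \geq 1$. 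Hence no such $\tau$ survives, $\sigma$ is free in the current subcomplex, and the elementary collapse $(\sigma,\Sigma)$ is legal. Iterating reduces $C$ to $\{v\}$.

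The main subtlety is in the forward direction: certifying that the temporal order of elementary collapses is compatible with an order-preserving map on the face poset. Once this is established, the Morse function is immediate and no separate verification of the absence of closed gradient paths is required. In the backward direction, the greedy argument is clean as soon as one knows the sole critical cell must be zero-dimensional; the strict monotonicity of $f$ on unmatched strict inclusions then schedules the free faces for us automatically.
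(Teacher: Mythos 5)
The paper states this result without proof, attributing it to Forman, and your argument is exactly the standard one: a collapsing sequence read backwards defines an order-preserving pairing function, and conversely the matched pairs of a Morse function with a single critical cell (necessarily a vertex, by Theorem~\ref{MorseThm}) can be removed greedily in decreasing order of $f$-value, the two-element-preimage condition guaranteeing freeness at each step. Your proof is correct and coincides with the intended approach, so there is nothing to add.
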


Collapsible complexes are contractible; collapsible PL manifolds are necessarily balls \cite{Whitehead}.
The following facts are easy to verify.

\begin{lemma}\label{lem:ccoll}
Let $C$ be a simplicial complex, and let $C'$ be a subcomplex of $C$.  Then the cone over base $C$ collapses to the cone over $C'$.
\end{lemma}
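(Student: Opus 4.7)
\begin{proofp}
My plan is to exhibit an explicit collapsing sequence from $v\ast C$ down to $v\ast C'$ by pairing each face $\sigma \in C\setminus C'$ with its cone $v\ast \sigma$, and removing these pairs in order of decreasing dimension of $\sigma$. Concretely, let $\sigma_1,\sigma_2,\ldots,\sigma_k$ be a linear order of the faces of $C\setminus C'$ satisfying $\dim \sigma_i \ge \dim \sigma_{i+1}$ for all $i$. Define $C_0 := v\ast C$ and, inductively,
\[
C_i := C_{i-1} \setminus \{\sigma_i,\; v\ast \sigma_i\}.
\]
I will check that each step $C_{i-1}\searrow_e C_i$ is a legitimate elementary collapse, and that $C_k = v\ast C'$.

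The key observation is that $\sigma_i$ is a free face of $C_{i-1}$. The cofaces of $\sigma_i$ in $v\ast C$ are of two types: cofaces $\tau \supsetneq \sigma_i$ lying in $C$, and cones $v\ast\tau$ with $\tau\supseteq\sigma_i$ a face of $C$. For the first type, note that if some such $\tau$ were in $C'$, then since $C'$ is a subcomplex (closed under taking faces), $\sigma_i$ would also be in $C'$, contradicting $\sigma_i\in C\setminus C'$; hence every proper coface $\tau$ of $\sigma_i$ in $C$ lies in $C\setminus C'$ and has $\dim\tau>\dim\sigma_i$, so $\tau=\sigma_j$ for some $j<i$, and therefore $\tau$ was removed at an earlier step. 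For the second type, the same argument shows that every $v\ast\tau$ with $\tau\supsetneq\sigma_i$ has already been deleted, while $v\ast\sigma_i$ itself is still present in $C_{i-1}$. So the unique face of $C_{i-1}$ strictly containing $\sigma_i$ is $v\ast\sigma_i$, making $\sigma_i$ free and the deletion $C_{i-1}\searrow_e C_i$ valid.

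Finally, after all $k$ steps, the faces of $v\ast C$ that have been removed are exactly the $\sigma\in C\setminus C'$ and their cones $v\ast\sigma$. What remains is the apex $v$, the faces of $C'$, and the cones $v\ast\tau$ for $\tau\in C'$, which together form $v\ast C'$. Thus $v\ast C\searrow v\ast C'$.

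The only subtle point is the ordering assumption; no single face needs to be ``top-dimensional in $C$'', only top-dimensional among the remaining faces of $C\setminus C'$ at the moment it is collapsed. The decreasing-dimension ordering guarantees this, and the hypothesis that $C'$ is a subcomplex is exactly what is needed to prevent any coface of $\sigma_i$ from accidentally surviving inside $C'$.
\end{proofp}
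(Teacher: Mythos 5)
Your proof is correct: the pairing $\sigma \leftrightarrow v\ast\sigma$ for $\sigma\in C\setminus C'$, processed in order of decreasing $\dim\sigma$, does make each $\sigma_i$ free in $C_{i-1}$ with unique coface $v\ast\sigma_i$, precisely because $C'$ being a subcomplex forces every proper coface of $\sigma_i$ to lie in $C\setminus C'$ and hence to have been removed earlier. The paper states this lemma as well known and gives no proof; your argument is the standard one and fills that gap correctly.
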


\begin{lemma}\label{lem:cecoll}
Let $v$ be any vertex of any simplicial complex $C$. If $\Lk(v,C)$ collapses to some subcomplex $S$, then $C$ collapses to 
$(C-v)\cup (v\ast S).$ In particular, if $\Lk(v,C)$ is collapsible, then  $C\searrow C-v$.  
\end{lemma}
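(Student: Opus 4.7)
My plan is to lift the collapsing sequence on $\Lk(v,C)$ to one on $C$ by coning with the apex $v$. Since $\St(v,C) = v \ast \Lk(v,C)$, the faces of $C$ that contain $v$ are in bijection with the faces of $\Lk(v,C)$ via $\rho \mapsto v \ast \rho$ (using the convention $v \ast \emptyset = \{v\}$). The key structural observation is that any face of $C$ properly containing $v \ast \tau$ must itself contain $v$, and hence has the form $v \ast \rho$ with $\rho \supsetneq \tau$ in $\Lk(v,C)$. Consequently, for every subcomplex $K$ of $\Lk(v,C)$ and the induced subcomplex $C_K := (C-v) \cup (v \ast K)$ of $C$, the face $v \ast \tau$ is free in $C_K$ if and only if $\tau$ is free in $K$; and when this holds, the unique face of $C_K$ properly containing $v \ast \tau$ is $v \ast \sigma$, where $\sigma$ is the unique face of $K$ properly containing $\tau$.

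Now express the hypothesized collapse as a sequence of elementary collapses $\Lk(v,C) = K_0 \searrow_e K_1 \searrow_e \cdots \searrow_e K_n = S$, realized via free pairs $(\tau_i, \sigma_i)$. Defining $C_i := (C-v) \cup (v \ast K_i)$ so that $C_0 = C$ and $C_n = (C-v) \cup (v \ast S)$, the observation above shows that $C_{i-1} \searrow_e C_i$ via the free pair $(v \ast \tau_i, v \ast \sigma_i)$. Concatenating these elementary collapses yields $C \searrow (C-v) \cup (v \ast S)$, which is the first assertion.

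For the ``in particular'' part, suppose $\Lk(v,C)$ is collapsible, so that $S$ consists of a single vertex $\{w\}$. The construction above then produces $C \searrow (C-v) \cup \{v, \{v,w\}\}$; in this remaining complex, $\{v\}$ is a free face of the edge $\{v,w\}$ (every higher face through $v$ has already been removed), so one last elementary collapse gives $C-v$. No step is a genuine obstacle; the only point requiring care is the inductive verification that each lifted pair $(v \ast \tau_i, v \ast \sigma_i)$ really is a free pair at step $i$, and this reduces immediately to the freeness of $(\tau_i, \sigma_i)$ in $K_{i-1}$ via the face correspondence established in the first paragraph.
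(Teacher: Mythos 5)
Your proof is correct. The paper states this lemma without proof (as ``well known''), and your argument --- lifting each elementary collapse $(\tau_i,\sigma_i)$ of $\Lk(v,C)$ to the free pair $(v\ast\tau_i,\, v\ast\sigma_i)$ in $(C-v)\cup(v\ast K_{i-1})$, using that every face of $C$ properly containing $v\ast\tau_i$ must contain $v$, and then performing the final elementary collapse $(\{v\},\{v,w\})$ --- is precisely the standard argument the authors are implicitly invoking, with the one delicate point (freeness of the lifted pairs at each intermediate stage) handled correctly.
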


\begin{lemma}\label{lem:uc}
Let $C$ denote a simplicial complex that collapses to a subcomplex $C'$. Let $D$ be a simplicial complex such that $D\cup C$ is a simplicial complex. If $D\cap C=C'$, then $D\cup C\searrow D$.
\end{lemma}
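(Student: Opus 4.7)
The plan is to lift the given collapsing sequence of $C$ to $C'$ into a collapsing sequence of $D \cup C$ to $D$, by performing exactly the same elementary collapses in the same order, and checking that each free-face property is preserved when $D$ is present.

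Concretely, write the assumed collapse as a sequence of elementary collapses $C = C_0 \searrow_e C_1 \searrow_e \cdots \searrow_e C_k = C'$, where the $i$-th elementary collapse removes a free face $\sigma_i$ of $C_i$ together with its unique proper coface $\Sigma_i$ in $C_i$. I would prove by induction on $i$ that:
\begin{compactitem}
\item[(a)] $D \cup C_i$ is a simplicial complex,
\item[(b)] $\sigma_i$ is free in $D \cup C_i$ with unique proper coface $\Sigma_i$,
\item[(c)] $(D \cup C_i)-\sigma_i = D \cup C_{i+1}$.
\end{compactitem}
Item (a) holds because $D \cup C_i \subseteq D \cup C$, which is a simplicial complex by hypothesis, and the intersection property for faces is inherited. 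Item (c) is a direct set-theoretic consequence of (b). The heart of the argument is (b).

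For (b), the point to verify is that no face of $D$ distinct from $\Sigma_i$ contains $\sigma_i$. Here I would use the critical observation: since $\sigma_i$ is removed in the collapse $C \searrow C'$, it is not a face of $C'$. Now suppose some $\tau \in D$ strictly contains $\sigma_i$. Then $\sigma_i$ is itself a face of $\tau$, hence $\sigma_i \in D$ because $D$ is a simplicial complex; combined with $\sigma_i \in C$, this gives $\sigma_i \in D \cap C = C'$, a contradiction. By the same reasoning applied to $\Sigma_i$ (which is also removed in the collapse and hence not in $C'$), we get $\Sigma_i \notin D$; therefore $\Sigma_i$ is genuinely the unique proper coface of $\sigma_i$ in $D \cup C_i$, and the elementary collapse is legitimate.

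Iterating the elementary collapses, one obtains $D \cup C \searrow D \cup C_k = D \cup C' = D$, where the last equality uses that $C' = D \cap C \subseteq D$. The only subtlety is the ``free face'' verification in step (b); once one sees that $\sigma_i \notin C'$ forces $\sigma_i \notin D$, the rest is automatic, so I do not anticipate any real obstacle.
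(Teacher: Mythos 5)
Your proposal is correct and follows essentially the same route as the paper: the paper's (very terse) proof likewise reduces to lifting each elementary collapse, observing that a free face of $C_i$ remains free in $D\cup C_i$. Your write-up merely makes explicit the key point the paper leaves implicit, namely that $\sigma_i\notin C'=D\cap C$ forces $\sigma_i\notin D$, so no face of $D$ can be an extra coface.
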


\begin{proof}
It is enough to consider the case $C\searrow_e C'=C-\sigma$, where $\sigma$ is a free face of $C$. For this, notice that the natural embedding $C\mapsto D\cup C$ takes the free face $\sigma\in C$ to a free face of $D\cup C$.
\end{proof}

\emph{Non-evasiveness} is a further strengthening of collapsibility that emerged in theoretical computer science~\cite{KahnSaksSturtevant}. A $0$-dimensional complex is \emph{non-evasive} if and only if it is a point. Recursively, a $d$-dimensional simplicial complex ($d>0$) is \emph{non-evasive} if and only if there is some vertex $v$ of the complex whose link and deletion are both non-evasive. Again, non-evasiveness is a combinatorial property only.

The notion of non-evasiveness is rather similar to vertex-decomposability, a notion defined only for \emph{pure} simplicial complexes \cite{ProvanBillera}; to avoid confusions, we recall the definition and explain the difference in the lines below. A $0$-dimensional complex is \emph{vertex-decomposable} if and only if it is a finite set of points. In particular, not all vertex-decomposable complexes are contractible. Recursively, a $d$-dimensional simplicial complex ($d>0$) is \emph{vertex-decomposable} if and only if it is pure and there is some vertex $v$ of the complex whose link and deletion are both vertex-decomposable. Note that in particular the link and the deletion in a vertex decomposition have to be pure. All vertex-decomposable contractible complexes are easily seen to be non-evasive, similar to the basic fact that contractible shellable complexes are collapsible \cite{BTM}. An important difference arises when considering cones. It is easy to see that the cone over a simplicial complex $C$ is vertex-decomposable if and only if $C$ is. In contrast,

\begin{lemma}[cf.~Welker~\cite{Welker}] \label{lem:conev}
The cone over any simplicial complex is non-evasive.
\end{lemma}

By Lemma~\ref{lem:cecoll} every non-evasive complex is collapsible. As a partial converse, we also have the following lemma
\begin{lemma}[cf.~Welker~\cite{Welker}]
The derived subdivision of every collapsible complex is non-evasive. 
In particular, the derived subdivision of any non-evasive complex is non-evasive.
\end{lemma}

A \emph{non-evasiveness step} is the deletion from a simplicial complex $C$ of a single vertex whose link is non-evasive. Given two simplicial complexes $C$ and $C'$, we write $C\searrow_{\NE} C'$ if there is a sequence of non-evasiveness steps which lead from $C$ to $C'$. We will need the following lemmas, which are well known and easy to prove: 

\begin{lemma}\label{lem:nonev}
If $C\searrow_{\NE} C'$, then $\mathrm{sd}^m  C   \searrow_{\NE}   \mathrm{sd}^m  C'$ for all non-negative $m$.
\end{lemma}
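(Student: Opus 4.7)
The plan is to iterate. By iterating the statement it suffices to handle the case $m = 1$, and by induction on the number of single steps composing $C \searrow_{\NE} C'$ it further suffices to treat one non-evasiveness step. So the heart of the argument is to show: if $\Lk(v, C)$ is non-evasive, then $\sd C \searrow_{\NE} \sd(C-v)$. I would prove this by induction on $\dim C$, simultaneously establishing two auxiliary statements: (A) if $L$ is non-evasive then so is $\sd L$; and (B) for any simplicial complex $X$ and any vertex $w \in X$, one has $\sd X - w \searrow_{\NE} \sd(X-w)$. Given (A) and (B), the single-step goal is immediate: the link of $v$ in $\sd C$ equals $\sd \Lk(v, C)$, which is non-evasive by (A), so $v$ can be deleted from $\sd C$; then (B) reduces $\sd C - v$ to $\sd(C - v)$.

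The combinatorial work is in (B). Inside $\sd X - w$, delete the vertices $\hat{\tau}$ for all $\tau \in X$ with $w \in \tau$ and $\tau \neq \{w\}$, in order of decreasing $\dim \tau$. Using $\Lk(\hat{\tau}, \sd X) = \sd \partial \tau \ast \sd \Lk(\tau, X)$ and the identification of $\hat{\tau'}$ with $\widehat{\tau' \setminus \tau}$ on the $\sd \Lk(\tau, X)$-side, direct inspection shows that at the moment of deleting $\hat{\tau}$, its link in the current complex has been reduced to exactly $\sd \partial \tau - w$: the $\sd \Lk(\tau, X)$-factor has lost every vertex (each $\widehat{\tau' \setminus \tau}$ with $\tau' \supsetneq \tau$ is one of them, and these exhaust all of them), while $\sd \partial \tau$ loses only $w$. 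The still-surviving same-dimensional vertices $\hat{\tau''}$ (with $\tau'' \neq \tau$) never enter $\Lk(\hat{\tau}, \sd X)$ by incomparability, so their status does not interfere. Since $\dim \partial \tau < \dim X$, the inductive case of (B) further reduces $\sd \partial \tau - w$ to $\sd(\partial \tau - w) = \sd \sigma$ (writing $\tau = w \ast \sigma$), and $\sd \sigma$ is the barycentric subdivision of a simplex, hence a cone, hence non-evasive by Lemma~\ref{lem:conev}. This certifies each deletion step, and after all deletions what remains is precisely $\sd(X - w)$. Statement (A) then follows from (B) by applying it step by step along a non-evasive reduction of $L$ to a point.

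The main obstacle is this link computation inside (B), together with the bookkeeping needed to order the deletions consistently. Once it is clear that the $\sd \Lk(\tau, X)$-factor is emptied exactly by the deletions of the higher-dimensional $\hat{\tau'}$, the nested induction on $\dim$ closes, and (A), (B), and the lemma itself all fall out.
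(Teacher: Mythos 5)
Your argument is correct, but note that the paper does not actually prove this lemma: it is stated as ``well known and easy to prove'' with a pointer to the literature, so the only in-paper proof to compare against is that of the neighbouring Lemma~\ref{lem:cone}, whose $m=1$ case is exactly your statement~(B). There you diverge in one interesting way: the paper deletes the barycenters $\hat\tau$ (for $\tau \ni w$) in order of \emph{increasing} dimension, so that when $\hat\tau$ is reached its current link is the order complex of $\mathrm{L}(\tau,C)\cup \F(\tau-w)$, which is visibly a cone with apex $\widehat{\tau-w}$ and hence non-evasive with no further work. Your \emph{decreasing}-dimension order instead leaves the current link equal to $\sd\,\partial\tau - w$, which is not a cone on its face; you must re-invoke~(B) inductively in one lower dimension to reduce it to $\sd(\partial\tau - w)=\sd\sigma$ before concluding non-evasiveness. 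Both orders work; the paper's choice buys a one-line certificate per deletion, while yours costs a nested induction on dimension but has the mild advantage that the incomparability of same-dimensional barycenters is all the bookkeeping you need. Your statement~(A) and the identification $\Lk(v,\sd C)\cong\sd\Lk(v,C)$, together with the reduction to a single non-evasiveness step and to $m=1$, are all sound, so the proposal stands as a complete self-contained proof of the lemma.
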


\begin{lemma}\label{lem:cone}
Let $v$ be any vertex of any simplicial complex $C$. Let $m \ge 0$ be an integer. Then $(\mathrm{sd}^m   C)-v   \searrow_{\NE}   \mathrm{sd}^m (C-v)$.
In particular, if $\mathrm{sd}^m  \Lk(v,C)$ is non-evasive, then $\mathrm{sd}^m   C   \searrow_{\NE}   \mathrm{sd}^m (C-v)$.
\end{lemma}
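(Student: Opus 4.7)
The plan is a double induction: outer induction on $m$ reducing to the base case $m=1$, and within that base case, an inner induction on the number of simplices of $C$. The outer step is a clean two-step concatenation: assuming the claim for $m-1$, applying the case $m=1$ to $\mathrm{sd}^{m-1}C$ gives $\mathrm{sd}^m C - v \searrow_{\NE} \sd(\mathrm{sd}^{m-1}C - v)$; Lemma~\ref{lem:nonev} applied to the inductive hypothesis $\mathrm{sd}^{m-1}C - v \searrow_{\NE} \mathrm{sd}^{m-1}(C-v)$ then yields $\sd(\mathrm{sd}^{m-1}C - v) \searrow_{\NE} \mathrm{sd}^m(C-v)$. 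So the real work is the case $m=1$.

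For $m=1$, I may assume $v$ is a vertex of $C$ (otherwise both sides coincide and there is nothing to prove). The plan is to delete from $\sd C - v$, one at a time, the ``extra'' vertices, namely the barycenters $\hat\sigma$ of the simplices $\sigma \in C$ strictly containing $\{v\}$, so that what remains is exactly $\sd(C-v)$. I enumerate these barycenters as $\hat\sigma_1, \hat\sigma_2, \ldots, \hat\sigma_N$ with $\dim\sigma_i$ weakly decreasing, and perform the deletions in this order.

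The key computation is the link of $\hat\sigma_k$ in the current complex $X_k$. Any chain in $\sd C$ that extends $\{\sigma_k\}$ \emph{upward} involves some $\rho \supsetneq \sigma_k \supsetneq \{v\}$ with $\dim\rho > \dim\sigma_k$; by the chosen ordering, $\hat\rho$ has already been deleted, so no upward extension survives. Hence only chains of proper faces of $\sigma_k$ avoiding $\{v\}$ remain, and $\Lk(\hat\sigma_k, X_k) = \sd(\partial\sigma_k) - v$. By the inner inductive hypothesis applied to $\partial\sigma_k$ (which has strictly fewer simplices than $C$, since $\partial\sigma_k \subsetneq \sigma_k \subseteq C$), this link non-evasively collapses to $\sd(\partial\sigma_k - v) = \sd(\sigma_k - v)$, the barycentric subdivision of a simplex. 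The latter is a cone with apex the barycenter of $\sigma_k - v$, hence non-evasive by Lemma~\ref{lem:conev}. Every deletion is thus valid, and after all $N$ deletions the complex is precisely $\sd(C-v)$.

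The ``in particular'' clause follows from the standard identity $\Lk(v, \mathrm{sd}^m C) = \mathrm{sd}^m \Lk(v, C)$, obtained by iterating $\Lk(v, \sd K) = \sd \Lk(v, K)$ via the bijection $\tau \mapsto \tau - v$ between chains of $K$ strictly above $\{v\}$ and chains in $\Lk(v, K)$; under the hypothesis, the link of $v$ in $\mathrm{sd}^m C$ is non-evasive, so the step $\mathrm{sd}^m C \searrow_{\NE} \mathrm{sd}^m C - v$ is valid and can be chained with the main statement. The most delicate point I anticipate is the link identification $\Lk(\hat\sigma_k, X_k) = \sd(\partial\sigma_k) - v$: one must verify carefully that the decreasing-dimension ordering really erases the entire ``upward'' factor of the full link $\sd(\partial\sigma_k) \ast \sd\Lk(\sigma_k, C)$, since otherwise the link would be a join with a subdivision of $\Lk(\sigma_k, C)$ whose non-evasiveness would be a substantially harder inductive target.
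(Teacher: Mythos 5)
Your proposal is correct, and its skeleton (outer induction on $m$ reducing to $m=1$ via Lemma~\ref{lem:nonev}, then deleting the barycenters of the faces strictly containing $v$ one at a time) is the same as the paper's. The genuine difference is the order of deletion, and it changes the nature of the key step. The paper deletes the barycenters in \emph{increasing} order of dimension: when $\hat\tau$ is deleted, all barycenters of smaller faces of $\tau$ containing $v$ are already gone but the upward part $\mathrm{L}(\tau,C)$ survives, so the link is the order complex of $\mathrm{L}(\tau,C)\cup\F(\tau-v)$; since every element of this poset is comparable to $\tau-v$, the link is outright a cone with apex $\widehat{\tau-v}$, and Lemma~\ref{lem:conev} applies directly with no further induction. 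You delete in \emph{decreasing} order, which kills the upward factor instead and leaves the link $\sd(\partial\sigma_k)-v$; this is not a cone, so you need a second, inner induction (on the number of simplices) to recognize it as an instance of the lemma itself, landing on the cone $\sd(\partial\sigma_k - v)$. Your link computation is right --- incomparability of equidimensional faces and the dimension count do guarantee that exactly the upward chains are erased and that no proper face of $\sigma_k$ other than $\{v\}$ has yet been removed --- and the double induction is well-founded since $\partial\sigma_k$ has strictly fewer simplices than $C$. The one point you should make explicit is the closure property you invoke silently: if $A\searrow_{\NE}B$ and $B$ is non-evasive, then $A$ is non-evasive (immediate by downward induction on the deletion sequence from the recursive definition). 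The paper's ordering avoids needing both this fact and the inner induction, so it is the leaner argument, but yours buys nothing worse than an extra layer of bookkeeping and is equally valid; the ``in particular'' clause via $\Lk(v,\mathrm{sd}^m C)\cong \mathrm{sd}^m\Lk(v,C)$ is also correct.
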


\begin{proof}
The case $m=0$ is trivial. We treat the case $m =1$ as follows: The vertices of $\sd C$ correspond to faces of~$C$, the vertices that have to be removed in order to deform $(\sd C) -v$ to $\sd (C-v)$ correspond to the faces of $C$ strictly containing $v$. The order in which we remove the vertices of $(\sd C)-v$ is by increasing dimension of the associated face. Let $\tau$ be a face of $C$ strictly containing $v$, and let $w$ denote the vertex of $\sd C$ corresponding to $\tau$. Assume all vertices corresponding to faces of $\tau$ have been removed from $(\sd C)-v$ already, and call the remaining complex~$D$. Denote by $\mathrm{L}(\tau,C)$ the set of faces of $C$ strictly containing $\tau$, and let $\F(\tau-v)$ denote the set of nonempty faces of $\tau-v$.
Then $\Lk(w, D)$ is combinatorially equivalent to the order complex of $\mathrm{L}(\tau,C)\cup \F(\tau-v)$, whose elements are ordered by inclusion. 
 Every maximal chain contains the face $\tau-v$, so $\Lk(w, D)$ is a cone, which is non-evasive by Lemma~\ref{lem:conev}. Thus, we have $D \searrow_{\NE} D-w$. The iteration of this procedure shows $(\sd C) -v \searrow_{\NE}  \sd (C-v)$, as desired.
The general case follows by induction on $m$: Assume that $m\geq 2$. Then 
\begin{align*}
(\mathrm{sd}^{m} C)-v\ =\  (\sd (\mathrm{sd}^{m-1} C))-v \  & \searrow_{\NE} \  \sd ((\mathrm{sd}^{m-1} C) - v)\\ & \searrow_{\NE}\  \sd (\mathrm{sd}^{m-1} (C - v))\ =\  \mathrm{sd}^m (C - v), 
\end{align*}
by applying the inductive assumption twice, and Lemma~\ref{lem:nonev} for the second deformation.
\end{proof}

\subsection{Acute and non-obtuse triangulations} \label{subsec:acute}

A simplex is called \emph{acute} (resp.\ \emph{non-obtuse}) if the dihedral angle between any two facets is smaller than $\frac{\pi}{2}$ (resp.\ smaller or equal than $\frac{\pi}{2}$). In any acute simplex, all faces are themselves acute simplices. In particular, all triangles in an acute simplex are acute in the classical sense. The same holds for non-obtuse simplices. A simplex is called \emph{equilateral} or \emph{regular} if all edges have the same length. Obviously, all equilateral simplices are acute and all acute simplices are non-obtuse. 
The next, straightforward lemma characterizes these notions in terms of orthogonal projections.

\begin{lemma}
A $d$-simplex $\Delta$ is acute (resp.\ non-obtuse) if and only if, for each facet $F$ of $\partial \Delta$, the closest-point projection $\pi_{\Delta - F}\, \mathrm{span} F$ of the vertex $\Delta - F$ to the affine span of $F$ intersects the relative interior of $F$ (resp.\ intersects $F$). 
\end{lemma}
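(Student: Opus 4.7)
Write $v$ for the unique vertex of $\Delta$ not lying in $F$, so that $\Delta-F$ is the point $v$ and $\pi_{\Delta-F}\,\mathrm{span}\,F=\pi_v(\mathrm{span}\,F)$ is the foot $h$ of the perpendicular from $v$ onto the affine hyperplane $\mathrm{span}\,F$. The plan is to show, for each facet $F$, that $h\in\mathrm{relint}\,F$ iff every dihedral angle of $\Delta$ along a codimension-$2$ face contained in $F$ is $<\pi/2$, and then to take the conjunction over all facets; the non-obtuse version is identical with $\leq$ replacing $<$.

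The first step is essentially definitional: $h\in\mathrm{relint}\,F$ precisely when every barycentric coordinate of $h$ with respect to the vertices of $F$ is strictly positive. Fix a vertex $w\in F$, let $G$ denote the codimension-$2$ face $F\setminus\{w\}$ of $\Delta$, and let $F'$ be the other facet of $\Delta$ containing $G$, so that $v\in F'$ and $w$ is the vertex of $\Delta$ opposite to $F'$. The coordinate of $w$ in $h$ is positive exactly when $h$ lies in $\mathrm{span}\,F$ strictly on the $w$-side of $\mathrm{span}\,G$, and I will match this with the sign of $\cos\alpha_{F,F'}$, where $\alpha_{F,F'}$ denotes the interior dihedral angle of $\Delta$ along $G$.

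For this sign computation I would work inside the $2$-plane $P\subset\mathrm{aff}\,\Delta$ passing through $g:=\pi_v(\mathrm{span}\,G)$ and orthogonal to $\mathrm{span}\,G$. Two unit vectors lie in $P$: the vector $u':=(v-g)/|v-g|$, which sits in $\mathrm{span}\,F'\cap(\mathrm{span}\,G)^\perp$, and the vector $u$ in $\mathrm{span}\,F\cap(\mathrm{span}\,G)^\perp$ pointing from $g$ toward $w$. By definition of dihedral angle, the angle between $u$ and $u'$ equals $\alpha_{F,F'}$. Orthogonally projecting $v-g$ onto the linear direction of $\mathrm{span}\,F$ amounts to decomposing along the orthogonal direct sum $(\text{linear direction of }\mathrm{span}\,G)\oplus\mathbb{R}u$: the first summand contributes zero since $v-g\perp\mathrm{span}\,G$, while the second yields $(u\cdot(v-g))\,u=|v-g|\cos\alpha_{F,F'}\,u$. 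One concludes
\[ h-g \;=\; |v-g|\cos\alpha_{F,F'}\,u, \]
so the signed position of $h$ relative to $\mathrm{span}\,G$ in the $w$-direction has exactly the sign of $\cos\alpha_{F,F'}$; this establishes the intermediate claim.

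Putting the pieces together, $h\in\mathrm{relint}\,F$ iff every dihedral angle of $\Delta$ along a codimension-$2$ face contained in $F$ is acute, and $h\in F$ iff every such angle is at most $\pi/2$. Since every codimension-$2$ face of $\Delta$ is contained in exactly two facets, imposing this condition for all facets $F$ is the same as bounding \emph{every} dihedral angle of $\Delta$, which is precisely the definition of acute (resp.\ non-obtuse). The only delicate step is the orthogonal decomposition in $P$ in the third paragraph; once that planar picture is set up, the rest reduces to a single line of linear algebra.
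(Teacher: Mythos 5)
Your proof is correct: reducing $h\in\mathrm{relint}\,F$ to the signs of the barycentric coordinates of $h$ with respect to the vertices of $F$, and then identifying the sign of the coordinate at $w$ with the sign of $\cos\alpha_{F,F'}$ via the identity $h-g=|v-g|\cos\alpha_{F,F'}\,u$, is the standard argument for this classical characterization, and every step (in particular the orthogonal decomposition $\mathrm{dir}(\mathrm{aff}\,F)=\mathrm{dir}(\mathrm{aff}\,G)\oplus\mathbb{R}u$ and the fact that the dihedral angle along $G$ is the angle between $u$ and $u'$) checks out. The paper states this lemma without proof, so there is no argument to compare against; I will only note that you correctly read $\Delta-F$ as the single vertex opposite $F$, which is the only interpretation under which the statement is meaningful, and that your argument tacitly assumes $d\ge 2$ (the case $d=1$ being vacuous).
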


In 2004, Eppstein--Sullivan--\"{U}ng\"{o}r showed that $\mathbb{R}^3$ can be tiled into acute tetrahedra~\cite{EppsteinEtAl}. This was strengthened by Van der Zee et al.~\cite{VanderZee} and Kopczy\'{n}ski--Pak--Przytycki~\cite{KopczynskiPakPrzytycki}, who proved that the unit cube in $\mathbb{R}^3$ can be tiled into acute tetrahedra. In contrast, there is no geometric triangulation of the $4$-cube into acute $4$-simplices~\cite{KopczynskiPakPrzytycki}.
For $d \ge 5$, neither $\mathbb{R}^d$ nor the $(d+1)$-cube have acute triangulations, cf.~\cite{KopczynskiPakPrzytycki}. 
In contrast, by subdividing a cubical grid, one can obtain non-obtuse triangulations of $\mathbb{R}^d$ and of the $d$-cube for any $d$. So, acute is a much more restrictive condition than non-obtuse.

\begin{figure}[htb]
	\centering
  \includegraphics[width=.94\linewidth]{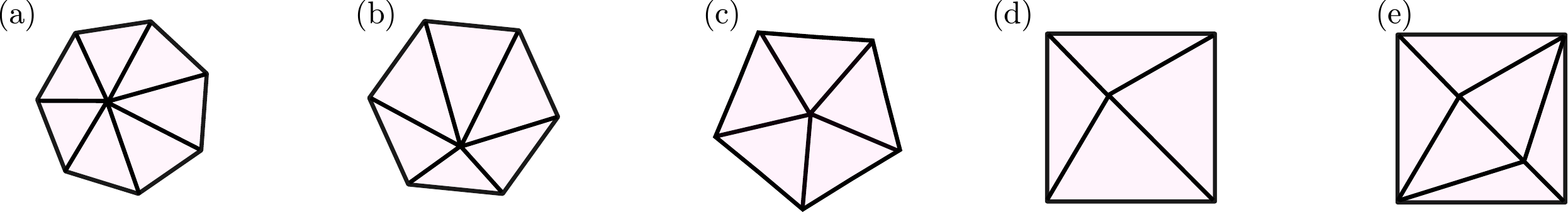}
	\caption{\footnotesize Complex { (a)} is 
$\CAT(-1)$ with an equilateral hyperbolic metric. Complex {(b)} is $CAT(0)$ with the equilateral flat metric. Complex {(c)} is not $\CAT(0)$ with the equilateral flat metric; it is $\CAT(0)$ with the acute piecewise euclidean metric that assigns length $1$ resp.\ $1.4$ to all interior resp.\ boundary edges. Complex {(d)} is flat with the non-obtuse piecewise euclidean metric that assigns length $1$ resp.\ $\sqrt{2}$ to all interior resp.\ boundary edges. Finally, only an obtuse metric can make complex {(e)} $\CAT(0)$; but the complex is $\CAT(1)$ with a non-obtuse metric.}
	\label{fig:pentagon}
\end{figure}

Let $C$ be an intrinsic simplicial complex in which every face of $C$ is isometric to a regular euclidean simplex. If such $C$ is $\CAT(0)$, we say that it is \emph{$\CAT(0)$ with the equilateral flat metric}. More generally, a $\CAT(k)$ intrinsic simplicial complex $C$ is \emph{ $\CAT(k)$ with an acute metric} (resp.\ \emph{$\CAT(k)$ with a non-obtuse metric}) if every face of $C$ is acute (resp.\ non-obtuse), see Figure~\ref{fig:pentagon}.

\section{Collapsibility and curvature bounded above} \label{sec:2}

\subsection{Gradient Matchings and Star-Minimal Functions}\label{subsec:1}
Here we obtain non-trivial Morse matchings on a simplicial (or polytopal) complex, by studying real-valued continuous functions on the complex.

\begin{definition}[Star-minimal]
Let $C$ be an intrinsic simplicial complex. A (nonlinear) function $f : |C| \rightarrow \mathbb{R}$ is called \emph{star-minimal} if it satisfies the following three conditions:
\begin{compactenum}[(i)]
\item $f$ is continuous,
\item on the star of each face of $C$, the function $f$ has a unique absolute minimum, and
\item no two vertices have the same value under $f$.
\end{compactenum}
\end{definition}

Condition (iii) is just a technical detail, since it can be forced by `wiggling' the complex a bit, or by perturbing $f$. Alternatively, we can perform a `virtual wiggling' by choosing a strict total order $\triangleleft$ on the vertices of $C$ such that so that if $f(x) < f(y)$, then $x\triangleleft y$. 

Conditions (i) and (ii) are also not very restrictive: Every generic continuous function on any simplicial complex is star-minimal.

Our next goal is to show that any star-minimal function on a complex $C$ naturally induces a certain type of Morse matching, called \emph{gradient matching}. The key is to define a ``pointer function'' $y_f: C \rightarrow C$, which intuitively maps each face into the ``best'' vertex of its star. (How good a vertex is, is decided simply by looking at its image under $f$.) Unlike $f$, which is defined on $|C|$, the map $y_f$ is purely combinatorial. Later, we will obtain a matching from $y_f$ basically by pairing every still unmatched face $\sigma$ together with the face $\sigma \ast y_f (\sigma)$.  

\begin{figure}[ht]
	\centering
  \includegraphics[width=.17\linewidth]{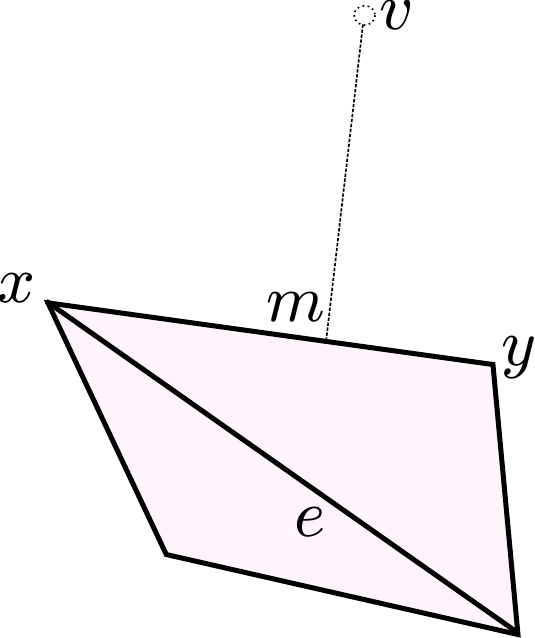}
 	\caption{\footnotesize How to match an edge $e$ with respect to the function $f = $ distance from some point $v$. This (nonlinear) function has a unique minimum $m$ on $|\St (e,C)|$. The inclusion-minimal face containing $m$ in its interior is spanned by the vertices $x$ and $y$. Between these two vertices, we choose the one with minimal distance from $v$. 
If it is~$x$, since $x$~is contained in~$e$, we do nothing. If it is~$y$, which does not belong to $e$, we match $e$ with $e \ast y$.}
	\label{fig:matching}
\end{figure}

In detail: Let $\sigma$ be a face of $C$. On the star of $\sigma$, the function $f$ has a unique minimum $m$. We denote by $\mu (\sigma)$ the inclusion-minimal face among all the faces of $\St(\sigma,C)$ that contain the point $m$. We set 
\[y_f (\sigma) \:= \ y, \]
where $y$ is the $f$-minimal vertex of $\mu (\sigma)$.
Since $f$ is injective on the vertices of $C$, the pointer function $y_f$ is well-defined. 

Next, we define a matching $\Theta_f : C \longrightarrow C$ associated with the function $f$. The definition is recursive on the dimension: In other words, we start defining $\Theta_f$ for the faces of lower dimension, working all the way up to the facets. Let $\sigma$ be a face of $C$. Set $\Theta(\emptyset) := \emptyset$. If for all faces $\tau \subsetneq \sigma$ one has $\Theta_f (\tau) \ne \sigma$, we define 
\[ \Theta_f(\sigma) \ := \ y_f (\sigma) \ast \sigma.\]
Note that every face of $C$ is either in the image of $\Theta_f$, or in its domain, or both. In the latter case, we have $\Theta_f (\sigma) = \sigma$. 

\begin{definition}[Gradient matching]
Let $\Theta : C \longrightarrow C$ be (the partial function associated to) a matching on the faces of a simplicial complex $C$. We say that $\Theta$ is a \emph{gradient matching} if 
$\Theta 
 =  
\Theta_f$
for some star-minimal function $f : |C| \rightarrow \mathbb{R}$.
\end{definition}

Next we show that all gradient matchings are indeed Morse matchings.
  
\begin{theorem}\label{thm:formula}
Let $C$ be a simplicial complex. Let $f:|C|\rightarrow \mathbb{R}$ be any star-minimal continuous function on the underlying space of~$C$. Then the induced gradient matching $\Theta_f$ is a Morse matching. Moreover, the map 
\[\sigma \mapsto (y_f(\sigma),\sigma)\]
yields a bijection between the set $\mathfrak{C}$ of the (nonempty) critical faces and the set \[\mathfrak{P} := \left \{
(v,\tau) \; | \; v \subset \tau, \,\ y_f(\tau)= v \textrm{ and } y_f(\tau- v)\ne v \right \}.\] 
In particular, the complex $C$ admits a discrete Morse function with $c_i$ critical $i$-simplices, where  
\[ c_i = \# \, \left\{
(v,\tau) \; | \; 
v \subset \tau, \, \dim \tau =i, \, y_f(\tau)= v  \textrm{ and } 
y_f(\tau- v )\ne v \right\}.
\]
\end{theorem}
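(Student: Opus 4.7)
The plan is to extract three conclusions from a single geometric lemma: (i) $\Theta_f$ is a well-defined discrete vector field on $C$, (ii) it has no closed gradient path and hence is a Morse matching by Forman's criterion, and (iii) its critical cells correspond bijectively under $\sigma \mapsto (y_f(\sigma),\sigma)$ to $\mathfrak{P}$. The engine of the whole argument is the following uniqueness statement, which I will prove first: \emph{if $\tau \subsetneq \sigma$ and $\Theta_f(\tau) = \sigma$, then $y_f(\tau) = y_f(\sigma)$, and in particular $\tau = \sigma - y_f(\sigma)$.}

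To prove this lemma, write $\tau = \sigma - w$ with $w = y_f(\tau)$. Since $\tau \subseteq \mu(\tau)$ and $w$ is a vertex of $\mu(\tau)$, we get $\sigma = \tau \cup \{w\} \subseteq \mu(\tau)$, so $\mu(\tau) \in \St(\sigma,C)$. In particular the minimum $M_\tau$ of $f$ on $|\St(\tau,C)|$, attained inside $|\mu(\tau)|$, is attained inside $|\St(\sigma,C)|$, yielding $M_\sigma \le M_\tau$; the reverse inequality follows from $|\St(\sigma,C)| \subseteq |\St(\tau,C)|$. Hence $M_\sigma = M_\tau$, and by star-minimality the two stars share a unique common minimizer, which lies in the relative interior of a single face of $C$. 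This forces $\mu(\tau) = \mu(\sigma)$, and therefore $y_f(\tau) = y_f(\sigma) = w$, proving the lemma.

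Conclusion (i) follows at once: for every $\sigma$ the only face below $\sigma$ the recursion can pair with $\sigma$ is $\sigma - y_f(\sigma)$, and symmetrically for any upward partner $\Sigma = \sigma \ast y_f(\sigma)$ the only preimage is $\Sigma - y_f(\Sigma)$, so no face is matched twice. For (ii), let $(\sigma_0,\Sigma_0),(\sigma_1,\Sigma_1),\ldots$ be a gradient path and put $v_i := y_f(\sigma_i) \notin \sigma_i$ and $M_i := \min f|_{|\St(\sigma_i,C)|}$. By definition $\sigma_{i+1}$ is obtained from $\Sigma_i = \sigma_i \ast v_i$ by removing a vertex of $\sigma_i$, so $v_i \in \sigma_{i+1}$ and hence $v_i \ne v_{i+1}$. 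From $\mu(\sigma_i) \supseteq \Sigma_i \supseteq \sigma_{i+1}$ we get $\mu(\sigma_i) \in \St(\sigma_{i+1},C)$, giving $M_{i+1} \le M_i$. Equality would force, by star-minimality, $\mu(\sigma_{i+1}) = \mu(\sigma_i)$ and hence $v_{i+1} = v_i$, a contradiction. So $M_i$ strictly decreases, precluding closed paths.

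Finally, for (iii), a face $\sigma$ is critical iff it is neither matched up nor matched down. Not matched up means $y_f(\sigma) \ast \sigma = \sigma$, i.e.\ $v := y_f(\sigma) \in \sigma$. Granted this, the lemma says the only candidate for a downward match is $\tau = \sigma - v$. A second application of the lemma (now with $v \notin \sigma - v$) shows that if $y_f(\sigma - v) = v$ then $\sigma - v$ itself admits no downward match, so the recursion assigns $\Theta_f(\sigma - v) = v \ast (\sigma - v) = \sigma$. Consequently $\sigma$ fails to be matched down iff $y_f(\sigma - v) \ne v$. Combining both criteria, $\sigma \mapsto (y_f(\sigma),\sigma)$ is the desired bijection onto $\mathfrak{P}$; restricting to $\dim \tau = i$ gives the count for $c_i$. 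The main obstacle is the uniqueness lemma, which carries all the geometric content; once it is available the rest is bookkeeping.
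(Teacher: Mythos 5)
Your architecture---one uniqueness lemma (``an upward match forces $y_f(\tau)=y_f(\sigma)$'') from which the vector-field property, acyclicity, and the classification of critical cells all follow---is essentially the paper's proof reorganized: the paper's Part (I) is your lemma applied to injectivity, Part (II) is your strict decrease of $M_i$ along gradient paths, and Parts (III)--(IV) are your item (iii). The packaging around a single lemma is arguably cleaner. However, one step is false as written, and you rely on it twice.

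You assert $\tau \subseteq \mu(\tau)$ (hence $\sigma \subseteq \mu(\tau)$, and later $\Sigma_i \subseteq \mu(\sigma_i)$). This fails in general: $\mu(\tau)$ is the inclusion-minimal face of $\St(\tau,C)$ containing the minimizer of $f$ on $|\St(\tau,C)|$, and nothing forces that minimizer to lie in a face containing $\tau$. Indeed, in the intended application (Theorem~\ref{thm:dischadamard}, where $f$ is the distance from a fixed vertex $w$), for any face $\tau$ of $\St(w,C)$ with $w \notin \tau$ the minimum of $f$ on $|\St(\tau,C)|$ is $0$, attained at $w$, so $\mu(\tau)=\{w\} \not\supseteq \tau$. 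The conclusion you actually need---that $\mu(\tau) \in \St(\sigma,C)$, whence $M_\sigma \le M_\tau$---is nevertheless true for a slightly different reason: by definition of the star, $\mu(\tau)$ is a subface of some face $\gamma$ of $C$ with $\gamma \supseteq \tau$; since $w=y_f(\tau)$ is a vertex of $\mu(\tau) \subseteq \gamma$, this $\gamma$ also contains $\tau \cup \{w\} = \sigma$, so $\mu(\tau)$ is a face of $\St(\sigma,C)$. The same repair applies verbatim in your part (ii) to obtain $\mu(\sigma_i) \in \St(\Sigma_i,C) \subseteq \St(\sigma_{i+1},C)$. With that substitution the lemma, and with it the rest of your argument, goes through; the remaining steps (strict decrease of $M_i$ via $v_i \in \sigma_{i+1}$ and uniqueness of minima, and the two-sided characterization of critical faces as exactly the members of $\mathfrak{P}$) are correct and agree with the paper's.
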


\begin{proof}
Our proof has four parts:
\begin{compactenum}[\bf (I)]
\item the pairs $(\tau,\Theta_f(\tau))$ form a discrete vector field;
\item this discrete vector field contains no closed gradient path;
\item the image of $\sigma \mapsto (y_f(\sigma),\sigma)$ is in $\mathfrak{P}$.
\item the map $\sigma \mapsto (y_f(\sigma),\sigma)$ is a bijection.
\end{compactenum}

Parts (I) and (II) guarantee that the pairs $(\tau,\Theta_f(\tau))$ form a Morse matching; parts (III) and (IV) determine how many of the faces are critical. The latter count will turn out to be crucial for establishing Theorem~\ref{thm:dischadamard} below.
 
\par \medskip \noindent \textbf{Part (I).} \emph{The pairs $(\tau,\Theta_f(\tau))$ form a discrete vector field.} 
We have to show that $\Theta_f$ is injective. Suppose $\Theta_f$ maps two distinct $k$-faces $\sigma_1,\sigma_2$ to the same $(k+1)$-face $\Sigma$. By definition of $\Theta_f$,
\[y_f(\sigma_1)\ast \sigma_1 = \Sigma = y_f(\sigma_2)\ast \sigma_2.\]
All vertices of $\Sigma$ are contained either in $\sigma_1$ or in $\sigma_2$. So $\St (\sigma_1,C) \cap \St (\sigma_2,C) =\St (\Sigma,C)$. Moreover, each $y_f(\sigma_i)$ belongs to $\Sigma$ but not to $\sigma_i$; so $y_f (\sigma_i)$ must belong to $\sigma_{3-i}$. This means that the function $f$ attains its minimum on $\St (\sigma_i,C)$ at a point $m_i$ which lies in $\sigma_{3-i}$. Since on $\St (\sigma_1,C) \cap \St (\sigma_2,C) =\St (\Sigma,C)$, the function $f$ has a unique minimum, we obtain $m_1 = m_2$. But then $y_f(\sigma_1) = y_f(\sigma_2) = y_f(\Sigma).$ 
Since $y_f(\sigma_1)\ast \sigma_1 =y_f(\sigma_2)\ast \sigma_2$, it follows that $\sigma_1 = \sigma_2$, a contradiction.

\par \medskip \noindent \textbf{Part (II).} \emph{The discrete vector field contains no closed gradient path.} 
Consider the function $\min_{\St (\sigma,C))} f$ from $C$ to $\R$. Choose an arbitrary gradient path $(\sigma_1,\Sigma_1), \ldots, (\sigma_k,\Sigma_k)$.  In this path, consider a pair $(\sigma_i, \Sigma_i)$. By definition of gradient path, the faces $\sigma_i$ and $\Sigma_i$ are matched with one another, and $\sigma_{i+1}$ is matched with $\Sigma_{i+1}$ and so on. In particular $\Theta_f (\sigma_{i}) = \Sigma_i$ and $\Theta_f (\sigma_{i+1}) = \Sigma_{i+1}$, and hence $\min_{\St (\sigma_i,C)} f= \min_{\St (\Sigma_i,C)} f$. We claim that in this case
\[\min_{\St (\Sigma_i,C)} f> \min_{\St (\sigma_{i+1},C)} f.\]
In fact, suppose by contradiction that $\min_{\St (\Sigma_i,C)} f= \min_{\St (\sigma_{i+1},C)} f$. Note that $\sigma_{i+1} \subset \Sigma_i$. By the star-minimality of~$f$, the local minimum of $f$ must be unique. So 
\[y_f (\sigma_{i}) = y_f (\Sigma_i) = y_f (\sigma_{i+1}).\]
So $y_f (\sigma_{i+1})$ is the unique vertex in $\Sigma_i$ but not in $\sigma_{i}$. In particular, we have that $y_f (\sigma_{i+1})$ belongs to $\sigma_{i+1}$. Yet this contradicts the fact that $\sigma_{i+1}$ is matched with coface $\Sigma_{i+1}$.
In conclusion, for any gradient path $\sigma_1, \ldots, \sigma_m$ one has $\min_{\St (\sigma_i,C)} f=\min_{\St (\Sigma_i,C)} f> \min_{\St (\sigma_{i+1},C)} f$. This excludes the possibility that the gradient path is closed.

\par \medskip \noindent \textbf{Part (III).} \emph{The image of $\sigma \mapsto (y_f(\sigma),\sigma)$ is in $\mathfrak{P}$.} Recall that 
\[\mathfrak{P} := \left \{
(v,\tau)  :  v \subset \tau, \,\ y_f(\tau)= v \textrm{ and } y_f(\tau- v)\ne v \right \}.\] 
Consider an arbitrary vertex $w$ of $C$. Either $y_f(w) \ne w$, or $y_f(w)=w$. If $y_f(w)$ is a vertex $x$ different from $w$, then $w$ is in the domain of $\Theta_f$; moreover, we see that $\Theta_f (w)$ is the edge $[x,w]$ and $w$ is not critical. If $y_f(w) = w$, again $w$ is in the domain of $\Theta_f$; one has $\Theta_f (w) = w$, so $w$ is critical. In this case it is easy to verify that $(w,w)$ belongs to $\mathfrak{P}$.
 
Now, consider a critical face $\sigma$ of dimension $k\ge 1$. Since $\sigma$ is critical, there is no $(k-1)$-face $\tau$ such that $\Theta_f (\tau) = \sigma$. By the definition of $\Theta_f$,  the face $\sigma$ is in the domain of $\Theta_f$; so since $\sigma$ is critical we must have $\Theta_f (\sigma) = \sigma$. So the vertex $y_f(\sigma)$ belongs to $\sigma$. In order to conclude that $(y_f(\sigma),\sigma)\in \mathfrak{P}$, we only have to prove that $y_f(\sigma- y_f(\sigma))\neq y_f(\sigma)$. Let us adopt the abbreviations $v:=y_f(\sigma)$ and $\delta:=\sigma- v$. Suppose by contradiction that $y_f(\delta)= v$. If $\delta$ were not in the image of any of its facets under the map $\Theta_f$, then we would have $\Theta_f(\delta)=\sigma$, which would contradict the assumption that $\sigma$ is a critical face. So, there has to be a codimension-one face $\rho$ of $\delta$ such that $\Theta_f(\rho)= \delta$. In other words, we see that $y_f(\rho)$ is the unique vertex that belongs to $\delta$ but not to $\rho$. Clearly $\am_{\, \St (\rho,C)\,} f \in \St (y_f(\rho), C)$, whence
\[\am_{\, \St (\rho,C) } \, f 
\, = \, 
\am_{\, \St (\rho \ast  y_f(\rho),C) } \, f 
\, = \, 
\am_{\, \St (\delta,C) } \, f.\]

\noindent So $y_f(\rho)=y_f(\delta)$. Recall that we are assuming $y_f (\delta) = v$, where $\delta := \sigma- v$. Hence
\[ y_f(\rho)=y_f(\delta) = y_f (\sigma- v) = v \; \notin \, \sigma- v.\]
This contradicts the fact that $\Theta_f(\rho)=\sigma- v$. Thus, the assumption $y_f(\delta)= v$ must be wrong. 

\par \medskip \noindent \textbf{Part (IV).} \emph{The map $\sigma \mapsto (y_f(\sigma),\sigma)$ from $\mathfrak{C}$ to $\mathfrak{P}$ is a bijection.}

The map $s:\sigma \mapsto (y_f(\sigma),\sigma)$ is clearly injective. Let us verify surjectivity. Consider a pair$(v,\tau)$ in $\mathfrak{P}$. By definition of $s$,
$y_f(\tau)= v $ and $y_f(\tau- v)\ne v$. Assume that $s(\tau)\notin
\mathfrak{P}$, or, equivalently, that $\tau$ is not critical.

Since $y_f(\tau)= v \subset \tau$, there must be a facet $\eta$ of $\tau$
such that $\Theta_f (\eta)=\tau$. By definition of $\Theta_f$, we have $\am_{\St
(\eta,C)}\in \St (\tau,C)$ and hence $\am_{\St (\eta,C)}= \am_{\St (\tau,C)}$.
Since $y_f$ only depends on the point where $f$ attains the minimum,
$y_f(\tau)=y_f(\eta)= y_f(\tau - v) = v $. This contradicts the
assumption that $(v,\tau)$ is in $\mathfrak{P}$. Thus $\tau$ is
critical, as desired.
\end{proof}

\begin{cor}\label{cor:c} 
Every gradient matching is a Morse matching.
\end{cor}

In fact, one can characterize gradient matchings within Morse matchings as follows: A Morse matching is a gradient matching if and only if for every matching pair $(\sigma,\Theta(\sigma))$ there is a facet $\Sigma\supset \Theta(\sigma)$ of $C$ such that, for any face $\tau$ of $\Sigma$ that contains $\sigma$ but does not contain $\Theta(\sigma)$, we have 
$\Theta (\tau)=\tau\ast \Theta(\sigma).$

\subsection{Discrete Hadamard--Cartan Theorem} \label{subsec:2}
In this section, we prove our discrete version of the Hadamard--Cartan theorem,
namely, that CAT(0) complexes with convex vertex stars are collapsible ({\bf Theorem~\ref{thm:dischadamard}}). Our theorem strengthens a result by Crowley, whose proof technique we shall briefly discuss. 

Crowley's approach uses the classical notion of \emph{minimal disk}, which is the disk of minimum area spanned by a $1$-sphere in a simply connected complex. Gromov \cite{GromovHG} and Gersten \cite{Gersten, Gersten2} have studied minimal disks in connection with group presentations and the word problem; later Chepoi and others~\cite{BandeltChepoi, ChepoiOsaj} have used them to relate $\CAT(0)$ cube complexes to median graphs, and systolic complexes to bridged graphs. To collapse a complex onto a fixed vertex $v$, Crowley's idea is to reduce the problem to the two-dimensional case, by studying the minimal disk(s) spanned by two geodesics that converge to $v$. Her argument is based on two observations:
\begin{compactenum}[(1)]
\item If the complex is three-dimensional, these minimal disks are $\CAT(0)$ with the equilateral flat metric, as long as the starting complex is. This was proven in~\cite[Theorem 2]{Crowley}. A similar result was obtained for ``systolic'' complexes in \cite{ChCAT} and \cite{JanusSwiat}, and for ``weakly systolic'' complexes in~\cite[Claim 1]{ChepoiOsaj}; \cite[Theorem 2]{Crowley} implies that all three-dimensional simplicial complex with equilateral simplices are systolic.
\item Any contractible $3$-dimensional pseudomanifold is collapsible if and only if it admits a discrete Morse function without critical edges. 
\end{compactenum}
Neither of these two facts extends to higher-dimensional manifolds:
\begin{compactenum}[(1)]
\item If we take the join of a triangle with a cycle of length $5$, the resulting complex is obviously collapsible, but not with Crowley's argument (even though it does support a $\CAT(0)$ equilateral flat metric): The minimal disk bounded by the $5$-cycle contains a degree-five vertex, and is consequently not $\CAT(0)$ with the equilateral flat metric. 
\item For any integers $m \ge 0$ and $d \ge 6$, some contractible $d$-manifold different from a ball admits a discrete Morse function with $1$ critical vertex, $m$ critical $(d-3)$-faces, $m$ critical $(d-2)$-faces, and no further critical face. (In particular, no critical edges.) This follows from discretizing, as in \cite{Gallais10} and \cite{B-Smoothing}, the smooth result by Sharko~\cite[pp.~27--28]{Sharko}. 
\end{compactenum}

\noindent Our new approach consists in applying the ideas of Section~\ref{subsec:1} to the case where $C$ is a $\CAT(0)$ complex, and the function $f : C \rightarrow \mathbb{R}$ is the distance from a given vertex $v$ of $C$. Because of the $\CAT(0)$ property, this function is star-minimal --- so it will induce a convenient gradient matching on the complex. The main advantage of this new approach is that it works in all dimensions.

\begin{theorem}\label{thm:dischadamard}
Let $C$ be a  $\CAT(k)$ intrinsic simplicial complex such that
\begin{compactenum}[\rm (i)]
\item for each face $\sigma$ in $C$, the underlying space of $\St (\sigma,C)$ in $C$ is convex, and
\item if $k >0$, every facet is contained in some ball of radius $< \tfrac{1}{2}\pi k^{\nicefrac{-1}{2}}$.
\end{compactenum}
Then $C$ is collapsible.
\end{theorem}
\vskip -3mm
\enlargethispage{3mm}
\begin{proof}
Fix $x$ in $C$ such that, if $k >0$, then every facet is contained in some ball of radius $<\tfrac{1}{2}\pi k^{\nicefrac{-1}{2}}$ around $x$.  Let $\mathrm{d} : C \longmapsto \mathbb{R}$ be the distance from $x$ in $C$, and let $w$ denote the vertex of $C$ that minimizes $\mathrm{d}$. 
Let us perform on the face poset of $C$ the Morse matching constructed in Theorem~\ref{thm:formula}. The vertex $w$ will be mapped onto itself and for each vertex $u \neq w$ we have~$y_{\mathrm{d}}(u) \ne u$. So  every vertex is matched with an edge, apart from $w$, which is the only critical vertex.

By contradiction, suppose there is a critical face $\tau$ of dimension $\ge 1$. Set $v:=y_{\mathrm{d}}(\tau)$. By Lemma~\ref{lem:unique}, on $\St (\tau,C)$ the function~${\mathrm{d}}$ attains its minimal value in the relative interior of a face $\sigma_v \in \St (\tau,C)$ that contains~$v$.
Let $\delta$ be any face of $\St (\tau- v,C)$ containing $\sigma_v$. Clearly $\delta$ contains~$v$. Thus $\St (\tau,C)$ and $\St (\tau- v,C)$ coincide in a neighborhood of~$v$.
By Lemma~\ref{lem:unique}, we have $\am_{\, \St (\tau- v,C)} \, {\mathrm{d}}= \am_{\, \St(\tau,C)} \, {\mathrm{d}}$.
Therefore $y_{\mathrm{d}}(\tau- v)=y_{\mathrm{d}}(\tau)= v$. This means that 
\[(v,\tau) \notin \{(v,\tau): v \subset \tau, \, y_{\mathrm{d}}(\tau)=~v \textrm{ and } y_{\mathrm{d}}(\tau- v) \ne v\};\] 
hence by Theorem~\ref{thm:formula} $\tau$ is not critical, a contradiction.
\end{proof}

\begin{rem} \label{rem:localconvexity}
In a $\CAT(0)$ space, any closed connected locally convex subset is also convex \cite{BW}. So in the assumptions of Theorem~\ref{thm:dischadamard} it suffices to check condition (i) locally: We can replace condition (i) with the request that $\St (\sigma,C)$ is convex for every ridge $\sigma$ (ridges are facets of facets of a simplicial complex).
In fact, if all ridges have convex stars, the closest-point projection to $\St(v,C)$ is a well-defined map and it is locally non-expansive for every vertex~$v$. 
But then the closest-point projection to $\St(v,C)$ is also globally non-expansive. Now let $x$ and $y$ be any two points in $\St(v,C)$, and let us assume the geodesic $\gamma$ from $x$ to $y$ leaves $\St(v,C)$. Let us project $\gamma$ to $\St(v,C)$. The result is a curve $\gamma'$ connecting $x$ and $y$ that is obviously lying in $\St(v,C)$, and not longer than $\gamma$. This contradicts the uniqueness of segments in $\CAT(0)$ spaces.
\end{rem}

\begin{cor} \label{cor:CrowleyA}
Let $C$ be an intrinsic simplicial complex. Suppose that $C$ is $\CAT(0)$ with a non-obtuse metric. 
Then $C$ is collapsible.
\end{cor}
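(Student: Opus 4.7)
The plan is to apply Theorem~\ref{thm:dischadamard} with $k = 0$: it suffices to show that for every vertex $v$ of $C$, the star $\St (v, C)$ is convex. By Remark~\ref{rem:localconvexity} it is enough to establish local convexity of each vertex star, since the $\CAT(0)$ property of $C$ then promotes local convexity to global convexity.

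The main geometric ingredient is the following projection property, which I would verify by induction on the codimension of $\sigma$ in $\tau$: if $\tau$ is a non-obtuse Euclidean simplex and $\sigma$ is any face of $\tau$, then the orthogonal projection of $\tau$ onto $\mathrm{span}(\sigma)$ lands inside $\sigma$. The codimension-one case is exactly the characterization of non-obtuse simplices via projections onto facets recorded just before the corollary. For higher codimension, pick a vertex $v \in \tau \setminus \sigma$, project $v$ first onto $\mathrm{span}(\tau - v)$, landing in $\tau - v$ by the base case, and then onto $\mathrm{span}(\sigma)$ within $\tau - v$, landing in $\sigma$ by the inductive hypothesis (since faces of non-obtuse simplices are non-obtuse). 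Linearity of orthogonal projection and convexity of $\sigma$ extend the conclusion from vertices to arbitrary points of $\tau$.

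Given this, local convexity of $\St (v, C)$ at a boundary point $p$ follows. Such $p$ lies in the relative interior of some face $\sigma$ of $C$ with $v \notin \sigma$ and $v \ast \sigma \in C$. A small neighborhood $U$ of $p$ in $C$ is covered by the simplices of $C$ containing $\sigma$; those outside $\St (v, C)$ are exactly the simplices $\tau \supset \sigma$ with $v \notin \tau$. For a point $q$ in such a $\tau$, the projection property places the orthogonal projection of $q$ onto $\mathrm{span}(\sigma)$ inside $\sigma \subset \St (v, C)$. The piecewise projections assembled over the external simplices $\tau$ agree on common subfaces and yield a well-defined, locally non-expansive closest-point map from $U$ to $\St (v, C) \cap U$, which is exactly the local convexity of $\St (v, C)$ at $p$.

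The main obstacle I anticipate is verifying that the projection onto $\sigma$ computed inside a single external simplex $\tau$ is really the closest-point projection to the whole of $\St (v, C)$, not merely to $\sigma$. This reduces to a spherical observation: the non-obtuse hypothesis on $C$ forces every edge of the spherical link $\Lk_p(\sigma, C)$ to have length at most $\pi/2$, which in turn makes the set of tangent directions from $p$ pointing into $\St (v, C)$ a closed, geodesically convex region in that link. Once this is secured, Remark~\ref{rem:localconvexity} combined with Theorem~\ref{thm:dischadamard} finishes the argument.
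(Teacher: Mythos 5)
Your overall strategy---extract convexity of stars from the non-obtuse condition and then invoke Theorem~\ref{thm:dischadamard}---matches the paper's, and your projection lemma (the orthogonal projection of a non-obtuse simplex $\tau$ onto $\Sp\sigma$ lands in $\sigma$, proved by iterating the facet case) is correct. But the step you yourself flag as the main obstacle is a genuine gap, not a technicality. Knowing that inside each ``external'' simplex $\tau\supset\sigma$ the projection onto $\Sp\sigma$ lands in $\sigma$ does not produce a non-expansive retraction of a neighborhood of $p$ onto $\St(v,C)$: the intersection $\tau\cap\St(v,C)$ is in general strictly larger than $\sigma$ (any face $\rho$ of $\tau$ with $v\ast\rho\in C$ belongs to it, even if $v\notin\tau$), so the map that sends all of $\tau$ to $\sigma$ while fixing $\St(v,C)$ pointwise is not even continuous, let alone the closest-point projection. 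Your proposed repair---that edges of $\Lk_p(\sigma,C)$ have length $\le\pi/2$, hence the set of directions into $\St(v,C)$ is geodesically convex in that link---is a non sequitur as stated: bounding edge lengths of a spherical complex by $\pi/2$ does not make a subcomplex of it convex, and the convexity of $\Lk_p(\sigma,\St(v,C))$ inside $\Lk_p(\sigma,C)$ is exactly the statement you are trying to prove, one dimension down; making it rigorous would require an induction through the links (using that links of $\CAT(0)$ complexes are $\CAT(1)$) which you have not set up. A secondary point: Theorem~\ref{thm:dischadamard}(i) asks for convexity of $\St(\sigma,C)$ for \emph{every} face $\sigma$ (its proof applies Lemma~\ref{lem:unique} to stars of faces of all dimensions), so verifying vertex stars alone does not discharge the hypothesis.

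The paper avoids all of this by pushing the reduction in the opposite direction. Remark~\ref{rem:localconvexity} reduces the hypothesis of Theorem~\ref{thm:dischadamard} to convexity of the stars of \emph{ridges}. For a ridge $R$, any two facets $\Sigma,\Sigma'$ containing $R$ each meet $R$ with dihedral angle $\le\pi/2$, so $\Sigma\cup\Sigma'$ is locally convex; since $C$ is $\CAT(0)$, local convexity promotes to convexity, and any two points of $\St(R,C)$ lie in such a union. There the local-convexity verification is a one-line statement about two half-spaces meeting at total angle $\le\pi$, which is precisely the elementary substitute for the link-convexity argument that is missing from your proposal.
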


\begin{proof}
By the assumption, there is a metric structure on $C$ of non-positive curvature, such that every face of $C$ is non-obtuse. In non-obtuse triangulations, the star of every ridge is convex. In fact, let $\Sigma,\Sigma'$ be two facets containing a common ridge $R$. Since $\Sigma$ and $\Sigma'$ are convex, and their union is locally convex in a neighborhood of $R$, we have that $\Sigma \cup \Sigma'$ is locally convex. Since the embedding space is $\CAT(0)$, convexity follows as in Remark~\ref{rem:localconvexity}.
\end{proof}

\begin{cor} \label{cor:CrowleyB}
Every intrinsic simplicial complex that is $\CAT(0)$ with the equilateral flat metric is collapsible.
\end{cor}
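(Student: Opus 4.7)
The plan is very short: this corollary should follow immediately from Corollary~\ref{cor:CrowleyA}, so the only work is to check that the ``equilateral flat'' hypothesis is a special case of the ``non-obtuse'' hypothesis already handled there. Recall that in Section~\ref{subsec:acute} the authors point out explicitly that every equilateral simplex is acute, and every acute simplex is non-obtuse. So if $C$ is $\CAT(0)$ with the equilateral flat metric, then by definition every face of $C$ is isometric to a regular euclidean simplex, hence non-obtuse, and $C$ is $\CAT(0)$ with a non-obtuse metric in the sense defined at the end of Section~\ref{subsec:acute}.

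With that observation in hand, the proof reduces to a one-line invocation: the complex $C$ satisfies the hypotheses of Corollary~\ref{cor:CrowleyA}, and therefore $C$ is collapsible.

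There is no real obstacle here; the work was done upstream in Theorem~\ref{thm:dischadamard} (via the star-minimal function $\mathrm{d}(\cdot) = \operatorname{dist}(w,\cdot)$ and the gradient matching of Theorem~\ref{thm:formula}) and in Corollary~\ref{cor:CrowleyA} (where the non-obtuse condition is used to show that stars of ridges are convex, which by Remark~\ref{rem:localconvexity} suffices to verify condition~(i) of Theorem~\ref{thm:dischadamard}). The only thing one might want to add, for emphasis, is a sentence reminding the reader that equilateral simplices are in particular acute (since all their dihedral angles equal $\arccos(1/d)$ in an appropriate sense, and in any case strictly less than $\pi/2$ for the euclidean regular simplex), so the inclusion ``equilateral flat $\subset$ non-obtuse'' is genuine and not a tautology of definitions.
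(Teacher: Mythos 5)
Your proposal is correct and is exactly the route the paper intends: Corollary~\ref{cor:CrowleyB} is stated without proof precisely because, as noted in Section~\ref{subsec:acute}, equilateral simplices are acute and acute simplices are non-obtuse, so the statement is an immediate specialization of Corollary~\ref{cor:CrowleyA}. Your parenthetical check that the regular euclidean $d$-simplex has dihedral angle $\arccos(1/d)<\pi/2$ is a nice confirmation that the inclusion is genuine, but nothing more is needed.
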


\begin{cor}[Crowley~{\cite{Crowley}}] \label{cor:CrowleyC}
Every $3$-pseudomanifold that is $\CAT(0)$ with the equilateral flat metric is collapsible.
\end{cor}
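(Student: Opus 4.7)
The plan is essentially to observe that this corollary is an immediate specialization of Corollary~\ref{cor:CrowleyB}. A $3$-pseudomanifold is, in particular, an intrinsic simplicial complex. If it carries the equilateral flat metric and is $\CAT(0)$ with respect to this metric, then the hypotheses of Corollary~\ref{cor:CrowleyB} are satisfied verbatim, so the complex is collapsible. No restriction on dimension or pseudomanifold structure enters the argument.

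For completeness, I would trace the logical chain back to Theorem~\ref{thm:dischadamard}. The equilateral flat metric makes every face of $C$ a regular euclidean simplex, which is acute, hence non-obtuse. By Corollary~\ref{cor:CrowleyA}, any $\CAT(0)$ intrinsic simplicial complex with a non-obtuse metric is collapsible; its proof uses the key geometric fact that in a non-obtuse triangulation the star of every ridge is convex (since two facets sharing a ridge meet at a dihedral angle of at most $\pi/2$, so their union is locally convex along the ridge, and Remark~\ref{rem:localconvexity} upgrades local convexity in a $\CAT(0)$ space to global convexity). Theorem~\ref{thm:dischadamard}, applied with $k=0$ and with convexity verified only on ridges as allowed by Remark~\ref{rem:localconvexity}, then delivers collapsibility by constructing the gradient Morse matching induced by the distance function from a fixed vertex, whose only critical face (via Theorem~\ref{thm:formula}) is that vertex.

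There is no main obstacle to overcome: all of the work has already been done in Theorem~\ref{thm:dischadamard} and its two preceding corollaries. The only thing worth emphasising is that the corollary is stated to underline that the theorem of Crowley cited in the introduction is recovered as the most restrictive instance of the present framework, namely when the ambient complex is further required to be a $3$-dimensional pseudomanifold. In particular, none of Crowley's two-dimensional minimal-disk machinery or her reliance on the pseudomanifold and dimension hypotheses is needed here; the gradient-matching approach of Section~\ref{subsec:1} bypasses these restrictions entirely.
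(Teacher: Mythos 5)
Your proposal is correct and matches the paper's (implicit) argument exactly: the corollary is stated without proof precisely because it is the special case of Corollary~\ref{cor:CrowleyB}, which in turn follows from Corollary~\ref{cor:CrowleyA} via the chain equilateral $\Rightarrow$ acute $\Rightarrow$ non-obtuse $\Rightarrow$ convex ridge stars $\Rightarrow$ collapsible by Theorem~\ref{thm:dischadamard} and Remark~\ref{rem:localconvexity}. Your observation that neither the dimension restriction nor the pseudomanifold hypothesis plays any role is also exactly the point the paper is making by recording Crowley's theorem as a corollary.
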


\begin{rem} Technical details aside, our proof is based on the idea of ``distance from a basepoint''. This idea has widely been used in mathematics, and in particular in graph theory, where it lies at the core of several algorithms, such as Bread-First-Search. Using a refinement of Bread-First-Search, Chepoi and Osajda proved that weakly systolic complexes are dismantlable, and in particular collapsible. Their result can thus be viewed as a ``more graph-theoretic'' version of Theorem \ref{thm:dischadamard}.
\end{rem}

\subsubsection*{Extension to polytopal complexes}
Theorem~\ref{thm:dischadamard} can be extended even to polytopal complexes that are not simplicial. The key for this is Bruggesser--Mani's rocket shelling of polytope boundaries, cf. e.g. Ziegler~\cite[Sec.~8.2]{Z}. The following, well-known lemma follows from the fact that there is a (rocket) shelling of $\partial P$ in which $\St(\sigma, \partial P)$ is shelled first~\cite[Cor.~8.13]{Z}:

\begin{lemma}[Bruggesser--Mani]\label{lem:BruMani}
For any polytope $P$ and for any face $\mu$,  $P$ collapses onto 
$\St(\mu, \partial P)$.
\end{lemma}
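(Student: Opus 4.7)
\begin{proofp}
The plan is to deduce the collapse from a suitably chosen Bruggesser--Mani line shelling of $\partial P$. Recall that such a shelling is built by moving a point along a generic line through the interior of $P$ and listing the facets of $\partial P$ in the order in which they become visible from outside. By choosing the line so that it enters $P$ through the relative interior of $\mu$, one obtains a shelling $F_1,\ldots,F_s$ of $\partial P$ in which the initial block $F_1,\ldots,F_k$ consists precisely of those facets of $\partial P$ that contain $\mu$; hence $F_1 \cup \cdots \cup F_k = \St(\mu,\partial P)$ as subcomplexes.

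The next step is to convert this shelling into an explicit sequence of elementary collapses. Viewing $P$ as a polytopal complex whose sole top-dimensional face is $P$ itself, the facet $F_s$ is a free face of $P$, so the pair $(F_s,P)$ can be collapsed. I would then proceed by reverse shelling: for $j=s,s-1,\ldots,k+1$ in turn, I would collapse $F_j$ together with exactly those faces of $F_j$ whose relative interiors do not lie in $F_1 \cup \cdots \cup F_{j-1}$. This is the content of the standard shelling-to-collapsibility lemma: by the definition of a shelling, the intersection $R_j := F_j \cap (F_1 \cup \cdots \cup F_{j-1})$ is a pure $(d{-}2)$-dimensional subcomplex of $\partial F_j$ that forms the beginning of a shelling of $\partial F_j$. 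Any facet of $F_j$ not lying in $R_j$ is therefore an exposed codimension-two face once $F_j$'s higher neighbours are gone, so $F_j$ can first be collapsed across one such facet, after which the remaining portion of $\partial F_j$ outside $R_j$ is peeled off by induction on dimension applied to $F_j$ with $R_j$ in the role of the target subcomplex.

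Halting the reverse shelling as soon as $F_{k+1}$ has been removed leaves exactly $F_1 \cup \cdots \cup F_k$ together with all their subfaces, which is $\St(\mu,\partial P)$, as required. The main obstacle is the dimension-lowering step, where one must verify that $F_j$ minus the relative interior of $R_j$ collapses onto $R_j$; this is essentially a lower-dimensional instance of the same statement, proved by a joint induction on $\dim P$ and the length of the shelling, using that the boundary of every polytope is shellable. The base case $\dim P=1$ is immediate since $P$ is an interval and either $\St(\mu,\partial P)$ is one endpoint (in which case the other endpoint is a free face of $P$) or $\mu = P$ (in which case nothing needs to be done).
\end{proofp}
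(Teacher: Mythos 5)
Your proposal is correct and follows essentially the same route as the paper: a Bruggesser--Mani rocket shelling through $\mu$ so that $\St(\mu,\partial P)$ comes first, the trivial polyhedral collapse of $P$ onto $\partial P$ minus the last facet, and then the standard reverse-shelling-to-collapse argument down to $\St(\mu,\partial P)$. The paper simply cites ``any contractible shellable complex is collapsible'' where you unfold the inductive details, but the argument is the same.
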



\begin{theorem}\label{thm:dischadamardpoly}
Let $C$ be any  $\CAT(k)$ (intrinsic) polytopal complex such that
\begin{compactenum}[\rm (i)]
\item for each face $\sigma$ in $C$, the underlying space of $\St (\sigma,C)$ in $C$ is convex, and
\item if $k >0$,  every facet is contained in some ball of radius $< \tfrac{1}{2}\pi k^{\nicefrac{-1}{2}}$.
\end{compactenum}
Then $C$ is collapsible.
\end{theorem}

\begin{proof}
We make the stronger claim that every polytopal complex that admits a function $f:|C|\rightarrow \R$ that takes a unique local minimum on each vertex star is collapsible. This situation clearly applies: Fix $x$ in $C$ such that, if $k >0$, then every facet is contained in some ball of radius $<\tfrac{1}{2}\pi k^{\nicefrac{-1}{2}}$ around $x$.  Let $\mathrm{d} : C \longmapsto \mathbb{R}$ be the distance from $x$ in $C$. This $\mathrm{d}$ is a function that has a unique local minimum on the star of each face by Lemma~\ref{lem:unique}.

We prove the claim by induction. Let $\sigma$ be a facet of $C$ maximizing $\min_{\sigma} f$, and let $\mu$ denote the strict face of $\sigma$ that minimizes  $f$. Let $F \subset \St (\mu,C)$ be the subcomplex induced by the facets of $C$ that attain their minimum at $\mu$. 
By Lemma~\ref{lem:BruMani}, we can collapse each facet $P$ of $F$ to $\St(\mu, \partial P)$. Hence, we can collapse $F$ to 
\[ \bigcup_{P \in F} \St(\mu, \partial P)  = \St (\mu,C) \: \cap \: \bigcup_{P \in F} \partial P, \]
where $P$ ranges over the facets of $F$. In particular, we can collapse $C$ to $C':=C-F$.

\begin{figure}[ht]
	\centering
	\vskip-1mm
  \includegraphics[width=.26\linewidth]{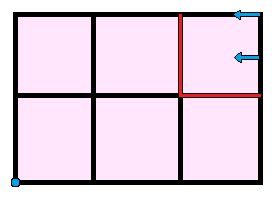}
 	\caption{\footnotesize How to collapse this $\CAT(0)$ cubical complex to the bottom left vertex $v$. We proceed by induction: Here $\sigma$ is the top right square, and $F$ is the complex highlighted in red.}
	\label{fig:grid}
\end{figure}

It remains to show the existence of a function on $C'$ that attains a unique local minimum on each star. Let us show that the restriction $f_{|C'}$ of $f$ to $C'$ is the function we are looking for. Assume that for some vertex~$w$ of~$C$ the function $f_{|C'}$ attains two local minima on $\St(w,C')$. Clearly $w$ is in $F$. Let $x$ be the absolute minimum of~$f$ restricted to~$\St (w,C')$; let $y$ be the other (local) minimum. Let $P$ be a facet of $C-F$ containing $x$. When restricted to $C$, the function $f$ attains a unique local minimum on the star of every face. Therefore, the point $y$ must lie in $F$. In particular, the facet $P$ must contain the minimum of $f$ on $F$. But $y$ is not that local minimum since $P$ is not in $F$, so $f$ takes two local minima on $P$, in contradiction with the assumption on~$f$.
\end{proof}

In particular, Theorem~\ref{thm:dischadamardpoly} holds for \emph{$\CAT(0)$ cube complexes}, which are complexes of regular unit cubes glued together to yield a $\CAT(0)$ metric space. These complexes have been extensively studied in the literature: See for example~\cite{Ardila}, \cite{BilleraHolmesVogtmann}, \cite{DavisJanus}, \cite{GromovHG}. 

\begin{cor} \label{cor:CubeComplexes}
Every  $\CAT(0)$ cube complex is collapsible.
\end{cor}

One instance of a $\CAT(0)$ cube complex is the space of phylogenetic trees, introduced by Billera, Holmes and Vogtmann~\cite{BilleraHolmesVogtmann}:

\begin{cor} \label{cor:phylo}
The space of phylogenetic trees is collapsible.
\end{cor}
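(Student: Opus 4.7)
The plan is to apply Theorem~\ref{thm:dischadamardpoly} to the Billera--Holmes--Vogtmann (BHV) space, once we recall its structure. First, I would recall that the BHV space of phylogenetic trees on a fixed set of leaves is an intrinsic cubical complex: each maximal orthant is isometric to a unit cube (or, for unbounded edge lengths, a cone on a cube; in either case the combinatorics is that of a cubical complex) and the gluings across codimension-one faces come by identifying parallel facets of unit cubes along isometries. Second, I would cite the main result of \cite{BilleraHolmesVogtmann}, which says that with this piecewise-euclidean metric the BHV space is globally $\CAT(0)$. Third, the crucial geometric ingredient is that the metric is non-obtuse in the sense used earlier in the paper, since cubes are right-angled: every dihedral angle inside a cube equals $\pi/2$.

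Next I would verify hypothesis (i) of Theorem~\ref{thm:dischadamardpoly}, namely convexity of vertex stars, by imitating the argument of Corollary~\ref{cor:CrowleyA}. For any ridge $R$ of the cubical complex, exactly the cubes of the form $R \cup \Sigma$ with $\Sigma$ a facet-cube containing $R$ are glued along $R$, and because their dihedral angles along $R$ are all $\pi/2$, the union of these cubes is locally convex near $R$. By Remark~\ref{rem:localconvexity}, in a $\CAT(0)$ space local convexity upgrades to global convexity of the star of each ridge. The same remark then yields convexity of the star of every face: the closest-point projections to vertex stars are well-defined and $1$-Lipschitz, so any geodesic between two points of $\St(\sigma,C)$ is forced to stay inside, by uniqueness of geodesics in $\CAT(0)$ spaces.

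With (i) established and (ii) being the $\CAT(0)$ property of BHV, Theorem~\ref{thm:dischadamardpoly} applies and delivers collapsibility. The only mildly delicate point, which is where I would take care, is the distinction between the bounded and unbounded versions of the BHV space. If the space is interpreted with bounded edge lengths (a compact cubical $\CAT(0)$ complex), Theorem~\ref{thm:dischadamardpoly} applies verbatim. If instead unbounded edge lengths are allowed, the whole space is a metric cone with apex the all-zero-lengths tree and is trivially collapsible (Lemma~\ref{lem:ccoll}, or rather its polytopal analogue), so the conclusion holds in either case. The main obstacle, as I see it, is not the $\CAT(0)$ part---that is imported from \cite{BilleraHolmesVogtmann}---but rather the honest verification that vertex stars of a cubical $\CAT(0)$ complex are convex, which must be pinned down via the non-obtuse/right-angled argument above before Theorem~\ref{thm:dischadamardpoly} can be invoked.
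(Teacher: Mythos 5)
Your proposal is correct and follows essentially the same route as the paper, which derives the corollary directly from Theorem~\ref{thm:dischadamardpoly} applied to $\CAT(0)$ cube complexes, citing \cite{BilleraHolmesVogtmann} for the $\CAT(0)$ property of the space of phylogenetic trees. The paper treats the convexity of stars in a right-angled cubical $\CAT(0)$ complex as implicit; you spell out that verification via the non-obtuse/locally convex argument of Corollary~\ref{cor:CrowleyA} and Remark~\ref{rem:localconvexity}, which is exactly the intended justification.
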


\subsubsection*{Vertex-transitive triangulations} \label{subsec:evasiveness}
In this short section we include a connection of Theorem~\ref{thm:dischadamard} between metric geometry and the evasiveness conjecture, following a suggestion by Anders Bj\"{o}rner. A \emph{vertex-transitive} complex is a simplicial complex with $n$ vertices on  which the symmetric group acts transitively. An important open problem in theoretical computer science is whether there is any vertex-transitive non-evasive simplicial complex, apart from the simplex. This is known as \emph{evasiveness conjecture}~\cite{KahnSaksSturtevant}\cite{LutzVH}.

It is known that collapsibility is not enough to force a vertex-transitive complex to be a simplex~\cite{LutzVH}. 
However, non-evasiveness is strictly stronger than collapsibility. We have shown in Section~\ref{subsec:2} that the property of ``being $\CAT(0)$ with the equilateral flat metric'' is also strictly stronger than collapsibility. Thus it makes sense to compare it with vertex-transitivity, in parallel with the statement of the evasiveness conjecture. Here is a simple observation:

\begin{proposition} \label{prop:evasiveness}
Every vertex-transitive simplicial complex that is $\CAT(0)$ with the equilateral flat metric is a simplex.
\end{proposition}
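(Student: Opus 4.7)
The plan is to combine vertex-transitivity with the existence of a canonical fixed point for the automorphism group on a $\CAT(0)$ space, and then use minimality of the carrier of that fixed point to force every vertex into a single simplex. First I would note that in the equilateral flat metric every combinatorial automorphism of $C$ acts on $|C|$ by isometries, since the intrinsic metric on each face is completely determined by its combinatorics (a regular unit simplex) and automorphisms preserve the gluing. Let $G\leq\mathrm{Aut}(C)$ be a subgroup acting transitively on the vertex set $V(C)$; because $V(C)$ is finite, so is $G$.

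Since $C$ is a finite intrinsic simplicial complex, $|C|$ is compact and, by hypothesis, $\CAT(0)$, hence a complete $\CAT(0)$ space. The standard circumcenter construction in $\CAT(0)$ geometry (equivalently, the Bruhat--Tits fixed point theorem) produces a unique point $c\in|C|$ minimizing the function $x\mapsto\max_{v\in V(C)}d(x,v)$; by uniqueness and $G$-invariance of $V(C)$, this point $c$ is fixed by every element of $G$, and all vertices of $C$ lie at a common distance $r$ from $c$.

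Next, let $\sigma$ be the unique inclusion-minimal face of $C$ whose relative interior contains $c$. For any $g\in G$, the face $g(\sigma)$ is a face of $C$ and contains $g(c)=c$ in its relative interior, so by uniqueness of the carrier of $c$ we get $g(\sigma)=\sigma$. In particular the vertex set $V(\sigma)$ is a nonempty, $G$-invariant subset of $V(C)$. Transitivity of $G$ on $V(C)$ then forces $V(\sigma)=V(C)$, so every vertex of $C$ lies in the single simplex $\sigma$; since every face of $C$ is spanned by vertices of $C$, every face of $C$ is a face of $\sigma$, and we conclude $C=\sigma$.

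The only delicate step is the very first one, namely producing the global fixed point $c$, and this is where the $\CAT(0)$ hypothesis enters in a crucial way (non-positive curvature of the equilateral flat metric is not enough — one really wants the global $\CAT(0)$ property to guarantee uniqueness of the circumcenter). Everything after that is purely combinatorial. In particular the argument uses none of the acute/non-obtuse machinery of Section~\ref{subsec:2} and generalizes at once to any vertex-transitive intrinsic simplicial complex admitting an $\mathrm{Aut}(C)$-invariant complete $\CAT(0)$ metric.
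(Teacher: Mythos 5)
Your proof is correct, and its first half runs parallel to the paper's: both arguments manufacture a canonical point of $|C|$ fixed by the automorphism group by exploiting the uniqueness of the minimizer of a convex function on a complete $\CAT(0)$ space. The paper minimizes the sum $a(x)=\sum_i \mathrm{d}(x,v_i)$ and uses an exchange argument to show its unique minimizer is equidistant from all vertices; you minimize the radius function $x\mapsto\max_i \mathrm{d}(x,v_i)$ (the Bruhat--Tits circumcenter) and obtain the fixed point directly from equivariance. The endgames genuinely differ. The paper concludes \emph{metrically}, invoking the fact that for the equilateral metric the only point equidistant from all vertices of a facet is its barycenter, and deducing that $C$ has a single facet. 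You conclude \emph{combinatorially}: the carrier of the fixed point is an $\mathrm{Aut}(C)$-invariant face, and a nonempty invariant vertex set under a transitive action must be all of $V(C)$. Your route buys two things: it sidesteps the paper's rather tersely justified metric step, and, as you observe, it uses the equilateral hypothesis only to ensure that automorphisms act by isometries, so it applies verbatim to any vertex-transitive complex carrying an $\mathrm{Aut}(C)$-invariant $\CAT(0)$ polyhedral metric. The only cost is the appeal to the circumcenter lemma, which is standard. A cosmetic remark: the equidistance of the vertices from $c$, which you derive from transitivity, is never actually used afterwards and could be omitted.
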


This follows directly from the Bruhat-Tits fixed point theorem for $\CAT(0)$ spaces, cf.\ \cite{BT}. A direct proof is simple enough, though, so we present it here:

\begin{proof} Let $C$ be a vertex-transitive intrinsic simplicial complex and let $v_1, \ldots, v_n$ be the vertices of $C$. Let $a: |C| \longmapsto \mathbb{R}$ be the function $a(x) := \sum_{i=1}^n \mathrm{d}(x, v_i)$. When $C$ is $\CAT(0)$, the function $a$ is strongly convex and has therefore a \emph{unique} minimum $m$. 
Since the simplicial complex is vertex transitive, and the function $a$ is also invariant under the symmetries of $C$, we claim that 
\[ \mathrm{d}(m, v_1) = \mathrm{d}(m, v_2) = \; \ldots \; = \mathrm{d}(m, v_n),\]
so that $m$ minimizes simultaneously all functions $x \mapsto \mathrm{d}(x, v_i)$. In fact, were $\mathrm{d}(m, v_k) < \mathrm{d}(m, v_j)$ for some $k  \ne j$, we could find by symmetry a point $m'$ such that  \[m\neq m' \ \ \text{and} \ \ \sum_{i}\mathrm{d}(m', v_i) = \sum_{i}\mathrm{d}(m, v_i).\]
But then we would have $a(m) = a(m')$: A contradiction since $a$ has a unique minimum.

So, there is one point in the complex equidistant from all of the vertices. This implies that $C$ has only one facet, because for each facet $F$ of $C$, the unique point equidistant from all vertices of $F$ is the barycenter of $F$ itself. 
\end{proof}

\subsection{Collapsible, contractible and CAT(0) manifolds} \label{subsec:3}
We already mentioned the famous result obtained in 1939 by Whitehead:

\begin{theorem}[Whitehead] Let $M$ be a (compact) manifold with boundary. If some PL triangulation of~$M$ is collapsible, then $M$ is a ball.
\end{theorem}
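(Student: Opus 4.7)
The plan is to use Whitehead's regular neighborhood machinery in the PL category --- the toolkit Whitehead himself developed in order to establish this theorem. The idea is to exhibit $M$ as a regular neighborhood of a single vertex inside a slightly enlarged PL manifold, and then to invoke the classical fact that a regular neighborhood of a point in a PL $n$-manifold is a PL $n$-ball.

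Concretely, let $K$ be a collapsible PL triangulation of $M$, and fix a collapse sequence $K \searrow v$ ending at a vertex $v$. Attach an external open collar $\partial M \times [0,1)$ along $\partial M$ to obtain an enlarged PL $n$-manifold $M^+$ with $M \subset \mathrm{int}(M^+)$; in particular $v \in \mathrm{int}(M^+)$ whether or not $v$ was originally on $\partial M$. The given collapse $K \searrow v$ extends to a collapse $M^+ \searrow v$ by first retracting the collar polyhedrally onto $\partial M$ and then performing the given collapse. Hence $M^+$ is a PL $n$-manifold that collapses onto an interior point, so by definition $M^+$ is a regular neighborhood of $\{v\}$ in itself. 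On the other hand, the star of $v$ in a second derived subdivision of $M^+$ is a simplicial cone over its link, and since $M^+$ is PL this link is a PL $(n-1)$-sphere; so this small neighborhood of $v$ is a PL $n$-ball. Whitehead's uniqueness theorem for regular neighborhoods of a compact subpolyhedron of a PL manifold then yields that $M^+$ is PL-homeomorphic to $B^n$. Finally, $M$ is recovered from $M^+$ by deleting the open collar, and deleting a collar from a PL ball yields a PL ball; thus $M$ is a PL ball.

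The main obstacle is hidden in the appeal to the PL regular neighborhood theorem, whose proof requires a nontrivial induction along elementary collapses: at each anticollapse step one must verify that the regular neighborhood of the expanded subpolyhedron is obtained from the previous regular neighborhood by gluing a PL $n$-ball along a PL $(n-1)$-ball face, an operation that preserves the property of being a PL ball. All remaining ingredients --- extending the collapse across the collar, identifying the second derived neighborhood with a cone on a PL sphere link, and trimming the collar --- are standard and essentially formal.
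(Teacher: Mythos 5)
The paper states this as a classical result of Whitehead and offers no proof of its own beyond the remark that Whitehead established it via the theory of regular neighborhoods; your argument is precisely that classical regular-neighborhood proof (collapse $\Rightarrow$ regular neighborhood of a vertex, second-derived neighborhood is a cone on a PL sphere, uniqueness of regular neighborhoods) and is correct in substance. The one slip is that the external collar should be the closed collar $\partial M \times [0,1]$ rather than the open one, so that $M^+$ stays a compact polyhedron for which ``$M^+ \searrow v$'' and ``$M^+$ is PL homeomorphic to $B^n$'' are meaningful statements; with that change every step is standard.
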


Let us focus on non-PL triangulations. Let us introduce a convenient notation:
\begin{compactitem}[$\bullet$]
\item by {$\CAT_d$} we denote the class of $d$-manifolds homeomorphic to $\CAT(0)$ cube complexes;
\item by {$\mathrm{COLL}_d$} we denote the $d$-manifolds that admit a collapsible triangulation; 
\item by {$\mathrm{CONT}_d$} we denote all contractible $d$-manifolds.
\end{compactitem}
Moreover, by \emph{PL singular set of $M$} we mean the subcomplex given by the faces of $M$ whose link is not homeomorphic to a PL sphere or a PL ball.

\begin{theorem}\label{thm:ccc}
For each $d \ge 5$ one has
\[\CAT_d=\mathrm{COLL}_d = \mathrm{CONTR}_d,\]
whereas for $d=4$ one has
\[\CAT_4=\mathrm{COLL}_4\subsetneq \mathrm{CONTR}_4.\]
In particular, when $d \ge 5$ not all collapsible manifolds are balls.
\end{theorem}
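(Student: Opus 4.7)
The plan is to prove the chain $\CAT_d \subseteq \mathrm{COLL}_d \subseteq \mathrm{CONTR}_d$ in every dimension, close it for $d\ge 5$ via the Ancel--Guilbault theorem, and separately treat $d=4$ using Whitehead's theorem together with the PL rigidity of triangulated $4$-manifolds.

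For $\CAT_d \subseteq \mathrm{COLL}_d$, let $M$ be homeomorphic to an intrinsic $\CAT(0)$ cube complex $X$. Each cube of $X$ is right-angled and hence non-obtuse, so the convexity argument in Corollary~\ref{cor:CrowleyA} together with Remark~\ref{rem:localconvexity} shows that every vertex star of $X$ is convex. The polytopal Discrete Hadamard--Cartan theorem (Theorem~\ref{thm:dischadamardpoly}) then yields a polyhedral collapse of $X$, which on the derived subdivision $\sd X$ becomes an honest simplicial collapse. Since $\sd X$ is a triangulation of~$M$, we obtain $M \in \mathrm{COLL}_d$. The next inclusion $\mathrm{COLL}_d \subseteq \mathrm{CONTR}_d$ is immediate from Theorem~\ref{MorseThm}: collapsible implies contractible, and a contractible topological $d$-manifold is by definition in $\mathrm{CONTR}_d$.

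To close the chain for $d\ge 5$, I invoke the theorem of Ancel--Guilbault~\cite{AG2}: every contractible manifold of dimension $\ge 5$ is homeomorphic to a $\CAT(0)$ cube complex, providing $\mathrm{CONTR}_d \subseteq \CAT_d$. The ``in particular'' assertion then follows because contractible non-ball manifolds exist in every dimension $\ge 5$ (for instance the Mazur- and Newman-style constructions cited in the introduction), and the now-established equality forces each such manifold into $\mathrm{COLL}_d$.

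For $d=4$, strictness $\mathrm{COLL}_4 \subsetneq \mathrm{CONTR}_4$ is provided by Mazur's contractible $4$-manifolds with non-simply-connected boundary, which cannot be balls. For the remaining equality $\CAT_4 = \mathrm{COLL}_4$, the inclusion $\CAT_4 \subseteq \mathrm{COLL}_4$ is the first step above, while conversely every simplicial triangulation of a $4$-manifold is automatically PL: each vertex link is a simplicial $3$-sphere, hence a PL sphere by Moise's theorem, so all iterated face links are PL. Therefore a collapsible triangulation of $M^4$ is a PL-collapsible PL manifold, so Whitehead's theorem forces $M$ to be a $4$-ball, and $4$-balls are trivially $\CAT(0)$ cube complexes. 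The main obstacle throughout is the Ancel--Guilbault input, which I treat as a black box; the rest is straightforward bookkeeping built on Theorem~\ref{thm:dischadamardpoly}, Whitehead's theorem, and Moise's theorem.
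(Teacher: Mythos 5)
Your handling of the inclusions $\CAT_d\subseteq\mathrm{COLL}_d\subseteq\mathrm{CONTR}_d$ and of the case $d=4$ matches the paper: Theorem~\ref{thm:dischadamardpoly} gives collapsibility of $\CAT(0)$ cube complexes (and of their derived subdivisions, hence collapsible \emph{triangulations}), Whitehead's theorem plus the PL-ness of all triangulated $4$-manifolds forces $\mathrm{COLL}_4=\{4\text{-balls}\}=\CAT_4$, and the Mazur manifold gives strictness in $\mathrm{CONTR}_4$. That part is fine.

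The genuine gap is in the step that closes the chain for $d\ge 5$. You invoke, as a black box attributed to Ancel--Guilbault, the statement that every contractible $d$-manifold ($d\ge5$) is homeomorphic to a $\CAT(0)$ cube complex. But that is not what Ancel--Guilbault prove --- it is essentially the content of the inclusion $\mathrm{CONTR}_d\subseteq\CAT_d$ that the theorem is asserting, i.e.\ you have assumed the conclusion. (The paper's abstract and Main Theorem~II explicitly describe this as ``a polyhedral version of a classical result of Ancel and Guilbault,'' i.e.\ as new.) What Ancel--Guilbault actually supply, and what the paper uses, is the arc-spine theorem: every contractible $d$-manifold $M$ ($d\ge5$) admits a triangulation $C$ whose PL singular set $S$ is an arc contained in the interior, is a deformation retract of $C$, and has complement homeomorphic to $M\times(0,1]$. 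The real work of the proof is the bridge from this to a $\CAT(0)$ cube structure: one hyperbolizes $C$ in the sense of Davis--Januszkiewicz, passes to the universal cover to obtain a $\CAT(0)$ cube complex $C'$, observes that the preimage of $S$ is a disjoint union of compact \emph{convex} subsets of $C'$ (because $S$ is an arc), and takes the subcomplex $N$ of cubes meeting one fixed component. By \cite[Thm.\ (5b.1)]{DavisJanus} this $N$ is PL homeomorphic to $C$, hence homeomorphic to $M$, and being a convex subcomplex of a $\CAT(0)$ cube complex it is itself $\CAT(0)$. Without this hyperbolization argument (or some substitute for it) your proof does not establish $\mathrm{CONTR}_d\subseteq\CAT_d$, and the ``in particular'' clause about non-ball collapsible manifolds collapses with it.
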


\begin{proof} Clearly, every $d$-ball can be given a $\CAT(0)$ cubical structure (consisting of a single $d$-cube). By Theorem~\ref{thm:dischadamardpoly}, every $\CAT(0)$ cube complex is collapsible, and so is its first derived subdivision \cite{Welker}; hence $\CAT(0)$ cube complexes admit collapsible triangulations. Finally, collapsible complexes are contractible. This proves that 
\[\{\textrm{$d$-balls} \} \subset \CAT_d\subset \mathrm{COLL}_d \subset \mathrm{CONTR}_d \: \textrm{ for all } d.\] 

When $d \le 4$, every triangulation of a $d$-manifold is PL (This is non-trivial; for $d=4$, this statement relies on the Poincar\'e-Perelman theorem). By Whitehead's theorem, this implies $\mathrm{COLL}_d \subset \{\textrm{$d$-balls} \}$, so the first two containments above are actually equalities. All contractible $3$-manifolds are balls, so 
\[\{\textrm{$d$-balls} \} =  \CAT_d = \mathrm{COLL}_d =\mathrm{CONTR}_d \: \textrm{ for } d\le 3.\] 
In contrast, some contractible $4$-manifolds are not balls, like the Mazur manifold \cite{Mazur}; hence
\[\{\textrm{$4$-balls} \} = \CAT_4=\mathrm{COLL}_4 \subsetneq  \mathrm{CONTR}_4.\]

When $d \ge 5$, it is a classical result that 
$\{\textrm{$d$-balls} \} \subsetneq \mathrm{CONTR}_d$.
So all we need to show is that $\mathrm{CONTR}_d \subset \CAT_d$, namely, that every contractible $d$-manifold, $d\ge 5$,  admits a $\CAT(0)$ cube structure. 
By a result of Ancel and Guilbault \cite{AS}, every contractible manifold $M$ admits a triangulation $C$ such that:
\begin{compactenum}[(1)]
\item the PL singular set $S$ of $C$ lies in the interior of $C$,
\item the PL singular set $S$ of $C$ is a path (i.e.\ a graph homeomorphic to a curve),
\item $S$ is a deformation retract of $C$, and
\item $|C| {\setminus} |S|$ is homeomorphic to $M\times (0,1]$.
\end{compactenum}
Let us hyperbolize $C$ (as in Davis--Januskiewicz \cite{DavisJanus}), and pass to the universal cover. The resulting complex $C'$ is a $\CAT(0)$ cube complex. Since $S$ is a path, its image under hyperbolization and lift to universal cover is a disjoint union $S'$ of convex compact sets  in $C'$. Let $N$ denote the subcomplex of $C'$ consisting of faces that intersect a fixed connected component of $S'$. As in \cite[Thm.\ (5b.1)]{DavisJanus}, we have that $N$ is a cubical complex PL homeomorphic to $C$ and consequently homeomorphic to $M$. Furthermore $N$ is a convex subcomplex of a $\CAT(0)$ cube complex; in particular, it is itself a $\CAT(0)$ cube complex. In conclusion,
\[\{\textrm{$d$-balls} \} \subsetneq  \CAT_d = \mathrm{COLL}_d =\mathrm{CONTR}_d \: \textrm{ for } d \ge 5. \qedhere\] 
\end{proof}

\begin{rem}\label{rem:Funar}
The first author and Funar have recently extended this result to arbitrary complexes, proving that any collapsible simplicial complex is PL equivalent to a $\CAT(0)$  (and even $\CAT(-1)$) polyhedral complex \cite[Proposition 15]{AFunar}. With our Theorem~\ref{thm:dischadamardpoly}, this implies that a simplicial complex admits a collapsible subdivision if and only if it admits a $\CAT(0)$ cube structure. However, not all contractible simplicial complexes admit a collapsible subdivision: A well known counterexample is given by the Dunce Hat.
\end{rem}

Here is a curious consequences concerning discrete Morse theory:

\begin{cor}  \label{cor:Forman}
Forman's discrete Morse theory can give sharper upper bounds than smooth Morse Theory (or of PL handle theory) for the Betti numbers of a manifold.
\end{cor}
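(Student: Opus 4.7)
The plan is to use Main Theorem~\ref{mainthm:CCC} (Theorem~\ref{thm:ccc}) to produce a single compact manifold on which discrete Morse theory realises the Morse inequalities with equality, while classical smooth Morse theory provably cannot. The key point is that in dimensions $d\ge 5$ the class $\mathrm{COLL}_d$ contains manifolds that are not balls, whereas the smooth Reeb-type theorem forces any smooth manifold admitting a Morse function with the ``optimal'' number of critical points (with respect to Betti numbers) to be a ball.

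First, fix $d\ge 5$ and pick a compact contractible smooth $d$-manifold $M$ that is \emph{not} homeomorphic to a ball, e.g.\ a higher-dimensional Mazur-type manifold built as a smooth thickening of a finite acyclic $2$-complex with nontrivial fundamental group; such smooth examples exist in every dimension $\ge 4$. By Theorem~\ref{thm:ccc}, $M$ admits a collapsible triangulation $C$, and Theorem~\ref{lem:equivalence} converts the collapsibility of $C$ into a discrete Morse function with exactly one critical face. Since $M$ is contractible, $\sum_i b_i(M)=1$, so the discrete Morse inequalities $c_i\ge b_i$ are simultaneously attained as equalities.

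Second, I contrast this with the smooth situation. Suppose for contradiction that $M$ admitted a smooth Morse function $g$ with a single critical point. Normalising $g$ so that $\partial M$ is a regular level set and the unique critical point is an interior minimum $p$, the negative gradient flow of $g$ yields a diffeomorphism $M\setminus\{p\}\cong \partial M\times (0,1]$; adjoining $p$ exhibits $M$ as the smooth open cone on $\partial M$, which is a smooth manifold only if $\partial M$ is a smooth sphere, in which case $M$ is diffeomorphic to a ball. This is the classical Reeb-type argument underlying Milnor's high-dimensional Poincar\'e theorem. Since our $M$ is not a ball (its boundary is not even simply connected), every smooth Morse function on $M$ has at least two critical points; in particular $\sum_i c_i(g)>\sum_i b_i(M)$, so the smooth weak Morse inequalities are necessarily strict.

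The principal subtlety is ensuring that the contractible non-ball $M$ supplied by Theorem~\ref{thm:ccc} can be chosen smooth, so that a comparison with smooth Morse theory is meaningful. In dimension $d\ge 5$ this is automatic: the relevant Mazur-type constructions produce smooth manifolds from the outset, and Theorem~\ref{thm:ccc} then equips each of them with a (necessarily non-PL) collapsible triangulation. Once this is in place, the separation between the discrete and smooth Morse numbers -- discrete equal to $\sum_i b_i(M)$, smooth strictly greater -- is immediate, proving the claim.
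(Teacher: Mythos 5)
Your construction of the separating example is exactly the paper's: a contractible smooth $d$-manifold $M$ with $d\ge 5$ that is not a ball, equipped via Theorem~\ref{thm:ccc} with a (necessarily non-PL) collapsible triangulation, hence a discrete Morse vector $(1,0,\dots,0)$, while the Reeb-type handle argument shows every smooth Morse function on $M$ needs at least two critical points. That half of your argument is correct, and your explicit gradient-flow justification of the smooth lower bound is a reasonable expansion of what the paper leaves implicit.

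However, you have only proved half of the corollary. The assertion that discrete Morse theory is \emph{more accurate} than smooth Morse theory requires, in addition to one manifold where discrete strictly beats smooth, the complementary statement that discrete Morse theory is \emph{never worse}: for every smooth manifold and every smooth Morse function with $c_i$ critical points of index $i$, some triangulation carries a discrete Morse function with $c_i$ critical $i$-cells, so that the optimal discrete Morse vector is always at most the optimal smooth one. The paper obtains this from Gallais's theorem (and its extension to manifolds with boundary in \cite{BenedettiSmooth}). Without this step, your argument only shows that the two theories are incomparable in at least one direction; a priori there could be other manifolds on which smooth Morse theory gives strictly better bounds than any discrete Morse function on any triangulation, and then neither theory would dominate the other. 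You should add the Gallais comparison (or some substitute for it) to close this gap.
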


\begin{proof} Let $M$ be any collapsible (non-PL) triangulation of a contractible manifold different than a ball, as given by Theorem~\ref{thm:ccc}. By a classical result of Gleason, every open contractible manifold admits a smooth structure, and therefore a smooth handle decomposition; from that one can always obtain a PL triangulation and a PL handle decomposition. (See e.g.\cite{B-Smoothing} for the definitions.) However, since $M$ is not a ball, any of its (smooth or PL) handle decompositions must contain more than one handle. In particular the Betti vector $(1,0, \ldots, 0)$ is not a possible smooth or PL handle vector for $M$. Yet by Theorem~\ref{thm:ccc}, the same vector is a possible discrete Morse vector for (a suitable triangulation of) $M$, as long as $\dim M \ge 5$. 
\end{proof}

\begin{rem}
The situation changes if we restrict ourselves to PL triangulations. Indeed, every PL handle decomposition naturally yields a discrete Morse function, and vice versa, as proven in \cite{B-Smoothing}.
\end{rem}

\section*{Acknowledgments}
We are grateful to Anders Bj\"{o}rner, Elmar Vogt, Federico Ardila, Frank Lutz,  G\"{u}nter Ziegler, Tadeusz Januszkiewicz and Victor Chepoi, for useful suggestions. \\
Karim~Adiprasito acknowledges support by a Minerva fellowship of the Max Planck Society, an NSF Grant DMS 1128155, an ISF Grant 1050/16 and ERC StG 716424 - CASe.\\
Bruno~Benedetti acknowledges support by NSF Grants 1600741 and 1855165, the DFG Collaborative Research Center TRR109, and the Swedish Research Council VR 2011-980. 
\\Part of this work was supported by the National Science Foundation under Grant No. DMS-1440140 while the authors were in residence at the Mathematical Sciences Research Institute in Berkeley, California, during the Fall 2017 semester.

{\small
\def\cprime{$'$}
\providecommand{\bysame}{\leavevmode\hbox to3em{\hrulefill}\thinspace}
\providecommand{\MR}{\relax\ifhmode\unskip\space\fi MR }
\providecommand{\MRhref}[2]{%
  \href{http://www.ams.org/mathscinet-getitem?mr=#1}{#2}
}
\providecommand{\href}[2]{#2}

}

\end{document}